\newtheorem{theorem}{Theorem}[section]
\newtheorem{lemma}[theorem]{Lemma}
\newtheorem{observation}[theorem]{Observation}
\newtheorem{question}[theorem]{Question}
\newtheorem{corollary}[theorem]{Corollary}
\newtheorem{proposition}[theorem]{Proposition}
\theoremstyle{definition}
\newtheorem{definition}[theorem]{Definition}
\title{The damage number of the Cartesian product of graphs}
\author{Melissa A. Huggan\thanks{Department of Mathematics, Vancouver Island University, Nanaimo, BC, Canada}, M.E. Messinger\thanks{Department of Mathematics and Computer Science, Mount Allison University, Sackville, NB, Canada}, Amanda Porter$^\dagger$}
\date{\today}
\begin{document}

\maketitle

\begin{abstract} We consider a variation of Cops and Robber where vertices visited by a robber are considered damaged and a single cop aims to minimize the number of distinct vertices damaged by a robber.  Motivated by the interesting relationships that often emerge between input graphs and their Cartesian product, we study the damage number of the Cartesian product of graphs.  We provide a general upper bound and consider the damage number of the product of two trees or cycles.   We also consider graphs with small damage number.
\end{abstract} 

\noindent keywords and phrases: pursuit-evasion on graphs, Cops and Robber, damage number, Cartesian product\medskip

\noindent 2020 AMS subject classification: 05C57, 68R10

\section{Introduction}

{\it Cops and Robber} is a pursuit-evasion game played on a graph with two players: a set of  {\it cops} and a single {\it robber}.  Initially, each cop chooses a vertex to occupy and then the robber chooses a vertex to occupy.  The game is played over a sequence of discrete time-steps, or {\it rounds}, and in each round, the cops move and then the robber moves, with the initial choice of positions considered to be round zero.   For the cops' move, each of the cops either moves to an adjacent vertex or remains at the vertex currently occupied (referred to as a {\it pass}).  The robber's move is defined similarly.  The cops win, if after some finite number of rounds, a cop occupies the same vertex as the robber: the cops have {\it captured} the robber.  If the robber can avoid capture indefinitely, the robber wins.  

Until recently, Cops and Robber has mostly been studied in the context of determining the minimum number of cops required to capture the robber (see~\cite{BN2011}).  A characterization of graphs for which one cop can capture the robber has long been known and many advances have been made towards a full characterization of graphs for which $k > 1$ cops are required to capture the robber. For graphs in which one cop can capture the robber, recent questions have included: ``how long until capture occurs?'' and ``how many distinct vertices can the robber visit?''. The former relates to the {\it capture time} of a graph and the latter to the {\it damage number} of a graph.  In this paper, we focus on the damage number of a graph.  This can be cast as a variation of the classic game of Cops and Robber, where the robber damages each distinct vertex that they visit. The goal of the robber is no longer to avoid capture, but to damage as many distinct vertices as possible (capture may or may not occur).  Naturally, the cop player aims to minimize the damage to the graph.  The robber damages a vertex by passing or moving to a neighbouring vertex. The damage number of a graph $G$, denoted ${\rm dmg}(G)$, was introduced by Cox and Sanaei~\cite{CS2019} in 2019; the motivation being scenarios where either the damage done by an intruder is costly or where there are insufficient cops to capture the robber.  In these situations, minimizing damage becomes the cops' priority since capture may be slow or not possible.  More recent results on the damage number appear in~\cite{throttling, multi-robber, SW}.

Interesting relationships are often found between input graphs and their Cartesian product, and this motivates most of our work.  In Section~\ref{sec:products}, we consider the damage number of the Cartesian product of graphs.  We provide an upper bound on dmg$(G \square H)$ for the case where at least one of $G,H$ have universal vertices before providing an upper bound for when $G,H$ are arbitrary input graphs.  In Sections~\ref{subsec:trees} and~\ref{subsec:cycles}, we determine the damage number exactly for the product of two finite trees and the product of two finite cycles, respectively. In Section~\ref{sec:small}, we completely characterize graphs with damage number $1$ and $2$. 

The remainder of this section is devoted to introducing concepts and definitions that will be used throughout the paper.  We assume all graphs are finite, non-empty, undirected, and connected.  A vertex $v$ is {\it damaged} by the robber if the robber occupies $v$ in round $i \geq 0$ and either passes or moves to a neighbouring vertex in the next round.  We note that in some Cops and Robber literature, including~\cite{CS2019}, graphs are assumed to be reflexive (i.e. every vertex contains a loop) and in such graphs, a {\it pass} equates to traversing the loop.  In other Cops and Robber literature, any subset of the cops and the robber are permitted to pass in any round and that is our assumption in this paper.  The damage number of a graph is the minimum number of distinct vertices that can be damaged by the robber.  To minimize the damage, a cop must immediately move to capture a robber on a complete graph; thus, there are some graphs for which capture is necessary to minimize damage.  In other graphs, such as trees, whether capture occurs is unimportant: it is easy to see that the damage is minimized when the cop initially occupies a center and then moves along the shortest path towards the robber.  However, once the robber occupies a leaf and the cop occupies the neighbouring stem vertex, the damage remains unchanged regardless of whether the cop moves to capture the robber or passes indefinitely. In contrast, for some graphs, capture is impossible.  In both $C_4$ and $C_5$, an optimal strategy for the cop is simply to pass at every round. For $C_4$, this strategy forces the robber to pass at every round (or be captured immediately). For $C_5$, the robber will be able to damage two vertices regardless of the cops' strategy; the cop can guard the remaining three vertices by passing unless the robber moves adjacent to the cop.  As a more interesting example, consider the cycle $C_6$, labeled according to Figure~\ref{fig:C6}.  Suppose the cop, C, initially occupies $v_1$ and the robber, R, occupies $v_5$ and consider the following cop-strategy:  the cop initially passes; if the robber moves to $v_4$, the cop moves to $v_2$ (preventing the robber from moving to $v_3$); and if the robber moves to $v_5$, the cop moves to $v_1$ (preventing the robber from moving to $v_6$).  Using this cop-strategy, which we call the \emph{oscillation strategy}, the damage is restricted to two vertices on $C_6$.  

\begin{figure}[htbp]

\[ \includegraphics[width=0.2\textwidth]{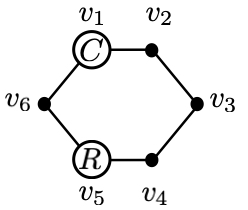} \]

\caption{An example of initial positions on $C_6$.}\label{fig:C6} 
\end{figure}

In the game of Cops and Robber, a graph is referred to as a {\it copwin} graph if one cop suffices to capture the robber.  For a copwin graph, the length of the game is $t$ if the robber is captured in the $t$-th round and a cop-strategy is optimal if its length is the minimum over all possible cop-strategies, assuming the robber is trying to avoid capture as long as possible. The invariant is denoted ${\rm capt}(G)$ and is called the {\it capture time} of $G$. Observe that ${\rm dmg}(G) \leq {\rm capt}(G)-1$~\cite{CS2019}. Note that the upper bound is meaningless for any graph $G$ that is not copwin.  

Recall that for some graphs, capture does not occur when the cop aims to minimize the damage.  In the case where the robber is eventually restricted to a single vertex, the cop must, at that point, be adjacent to all neighbours of the robber. Graph theoretically, this is described in the next definition as the robber being forced to occupy an \emph{o-dominated} vertex.  

\begin{definition}[\cite{CN2005}]
A vertex $u$ of a graph $G$ is \emph{o-dominated} if there exists a vertex $v \in V(G)$ such that $N(u) \subseteq N(v)$.  A vertex $u$ of a graph $G$ is \emph{c-dominated} if there exists a vertex $v \in V(G)$ such that $N[u] \subseteq N[v]$.  
\end{definition}
A simple example is $C_4$: the vertex occupied by the robber is $o$-dominated by the vertex occupied by the cop. 
Note that c-dominated is synonymous with the term {\it corner}, which is often used in Cops and Robber literature. For ease of reading we use the term c-dominated because the distinction between c-dominated and o-dominated vertices will come up often. 

At times we will need the definition of \emph{dominated}, where the edge between $u$ and $v$ may or may not exist. The definition is as follows.

\begin{definition}[\cite{Dahl1995}]
A vertex $u$ of a graph $G$ is \emph{dominated} if there exists a vertex $v \in V(G)$ such that $N(u) \subseteq N[v]$. In such a case, we say that $v$ dominates $u$. 
\end{definition}

If a graph $G$ contains no c-dominated vertices, then whenever the cop moves to a vertex adjacent to the robber, there is a (possibly already damaged) vertex to which the robber can move, that is not adjacent to the cop.  Consequently, the robber can avoid capture, yet damage ${\rm dmg}(G)$ vertices.  We formally state this result as it will be used later.   

\begin{observation}\label{obs:no-c} Let $G$ be a graph with no c-dominated vertices.  There exists a strategy for the robber to damage ${\rm dmg}(G)$ vertices and avoid capture.\end{observation}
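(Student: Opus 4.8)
The plan is to start from an optimal robber strategy and patch it so that capture is avoided at no cost to the damage. Write $d = {\rm dmg}(G)$ and fix a robber strategy $\sigma$ that guarantees at least $d$ damaged vertices against every cop strategy; such a $\sigma$ exists by the definition of the damage number. The only thing $\sigma$ fails to promise is the avoidance of capture, so the whole argument reduces to showing that, whenever $\sigma$ would permit capture, the robber can sidestep it without losing damage.

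The key local step uses the hypothesis directly. Consider the robber's turn with the robber at a vertex $u$ and the cop at a vertex $c$ adjacent to $u$, so that capture is threatened on the cop's next move. Since $G$ has no c-dominated vertices, $u$ is not c-dominated, and in particular $N[u]\not\subseteq N[c]$; hence there is a vertex $w\in N[u]\setminus N[c]$. The robber moves to $w$. Because $w\in N[u]$ this is a legal move, because $w\ne c$ the robber is not captured, and because $w\notin N[c]$ the cop cannot capture from $c$ on its following move. Thus the escape buys at least one round, and since the same argument applies at the next threatened position, the robber can evade capture indefinitely.

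Define the patched strategy $\tau$: the robber follows $\sigma$ until a round in which continuing $\sigma$ would lead to capture, at which point it performs the escape move to some $w\in N[u]\setminus N[c]$ described above and then resumes. By the local step, $\tau$ never results in capture. For the damage count, the crucial point is that under $\sigma$ any capture can occur only after at least $d$ vertices have already been damaged; otherwise the game would end (at capture) with fewer than $d$ damaged vertices, contradicting the guarantee that defines $\sigma$. Hence, at the moment the robber chooses to escape rather than be captured, at least $d$ vertices are already damaged, and since the damaged set never shrinks, $\tau$ still damages at least $d$ vertices, now without capture. Against a damage-minimizing cop this count is exactly $d$, which is the claim.

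I expect the main obstacle to be this last step: reconciling the deviation from $\sigma$ with the promise of $d$ damaged vertices. The escape moves take the robber off the trajectory prescribed by $\sigma$, so one cannot simply quote $\sigma$'s guarantee after a deviation; the delicate part is the bookkeeping around the exact round in which capture would occur (capture falls on the cop's sub-move, so the robber must act proactively on the preceding turn) and the verification that the damaged set has already reached size $d$ before each escape. Making this timing and the repeated-escape accounting airtight is where the real work lies.
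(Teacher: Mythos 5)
Your proof is correct and takes essentially the same approach as the paper: the paper gives the observation no formal proof, justifying it only by your same local escape step (a threatened robber at $u$ can always move to some $w \in N[u]\setminus N[c]$, possibly already damaged, since $u$ is not c-dominated) together with an unelaborated claim that avoiding capture costs no damage, which your patching of an optimal strategy $\sigma$ makes precise. One off-by-one caveat in your bookkeeping: at the moment the robber \emph{decides} to escape, only $d-1$ vertices need already be damaged; the count reaches $d$ upon making the escape move itself, since that move damages the current vertex $u$ and the resulting damage set $D\cup\{u\}$ is exactly the final damage set of the hypothetical play in which the robber follows $\sigma$ into capture, so your conclusion stands after this small repair, which is precisely the timing subtlety your closing paragraph anticipates.
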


To conclude this section, we state a useful lower bound.  If a graph $G$ has radius $1$, then the robber is immediately captured, no vertices are damaged, and the result holds. For a graph with radius at least $2$, suppose the cop initially occupies some vertex $x$.  Let $z$ be a vertex distance rad$(G)$ from $x$ and let $P$ be the shortest $xz$-path.  If the robber initially occupies a vertex $y$ on $P$ that is distance two from $x$, then the robber can damage at least rad$(G)-1$ vertices by moving along $P$ towards $z$. 

\begin{theorem}[\cite{throttling}]
\label{thm:newlowerbound}
For any graph $G$, ${\rm dmg}(G) \geq {\rm rad}(G)-1$. 
\end{theorem}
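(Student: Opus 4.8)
The plan is to make the informal argument sketched in the paragraph preceding the statement fully rigorous. The statement $\mathrm{dmg}(G) \ge \mathrm{rad}(G)-1$ is a lower bound on damage, so I must exhibit a \emph{robber strategy} that guarantees at least $\mathrm{rad}(G)-1$ distinct vertices are damaged, \emph{regardless} of how the cop plays. Since the cop moves first in each round and the cop's initial placement is chosen before the robber's, the robber's strategy must be allowed to depend on the cop's initial vertex $x$.

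First I would dispose of the trivial cases. If $\mathrm{rad}(G)=1$, then $\mathrm{rad}(G)-1 = 0$ and the bound holds vacuously since damage is always nonnegative. So I may assume $\mathrm{rad}(G) \ge 2$. Next, with the cop's initial vertex $x$ fixed, I select a vertex $z$ with $d(x,z) = \mathrm{rad}(G)$; such a $z$ exists because the eccentricity of $x$ is at least $\mathrm{rad}(G)$ (the radius is the \emph{minimum} eccentricity, so every vertex has eccentricity at least $\mathrm{rad}(G)$, and in particular $x$ has a vertex at distance exactly $\mathrm{rad}(G)$ from it). Fix a shortest $xz$-path $P = x = p_0, p_1, \ldots, p_{\mathrm{rad}(G)} = z$, and have the robber start at $y = p_2$, the vertex at distance two from $x$ along $P$.

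The core of the argument is then to show that, by walking along $P$ from $p_2$ toward $z$ (passing only if forced), the robber damages the $\mathrm{rad}(G)-1$ distinct vertices $p_2, p_3, \ldots, p_{\mathrm{rad}(G)}$ before the cop can interfere. The key quantitative fact is a distance-lead invariant: at the start of any round, if the robber occupies $p_i$ and the cop occupies some vertex $c$, then $d(c, p_i) \ge 2$. This holds initially since $d(x, p_2) = 2$ (as $P$ is a shortest path, $d(x,p_2)=2$ exactly). In each subsequent round the cop moves first and can decrease its distance to the robber's current vertex by at most one, after which the robber advances one step along $P$; because $P$ is a geodesic, advancing from $p_i$ to $p_{i+1}$ changes the cop--robber distance in a controlled way, and I would argue the robber maintains a lead of at least one so that the cop never occupies the robber's vertex at the moment the robber is about to move. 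Consequently each of $p_2, \ldots, p_{\mathrm{rad}(G)}$ is genuinely damaged (the robber occupies it and then moves or passes), and these are $\mathrm{rad}(G)-1$ distinct vertices because $P$ is a path.

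The main obstacle I anticipate is making the distance-lead invariant airtight against an adversarial cop, and in particular confirming that ``damaging'' $p_{\mathrm{rad}(G)} = z$ is legitimate: by the definition in the excerpt, a vertex is damaged if the robber occupies it and then either passes or moves, so once the robber reaches $z$ it suffices for it to pass to damage $z$, giving the full count of $\mathrm{rad}(G)-1$ vertices. I would also need to handle the edge case where the cop rushes along $P$ toward the robber: since the cop starts a full distance two behind at $p_2$ and the robber always moves away along the geodesic, the cop can at best match the robber's pace and never close the final gap before all of $p_2,\ldots,z$ have been visited. Carefully tracking the parity of moves (cop moves before robber within a round) is the delicate point, but the geodesic property of $P$ ensures the lead is preserved throughout.
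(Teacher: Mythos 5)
Your proposal is correct and takes essentially the same approach as the paper's own argument (the sketch in the paragraph preceding the statement): place the robber at distance two from the cop's start $x$ on a geodesic to a vertex $z$ with ${\rm dist}(x,z)={\rm rad}(G)$, and walk toward $z$, damaging ${\rm rad}(G)-1$ vertices. One tightening worth noting: the cop--robber distance invariant you state is true but does not close inductively on its own (from $d(c,p_i)\geq 2$ one only gets $d(c',p_{i+1})\geq 0$ after both moves); the clean fix is to anchor the induction at $x$ or $z$ --- e.g.\ after $k$ cop moves the cop is within distance $k$ of $x$, while the robber occupies a geodesic vertex at distance at least $k+1$ from $x$, so capture is impossible throughout.
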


\section{The damage number of products}\label{sec:products}

Products are powerful tools in graph theory. Interesting relationships often emerge between the value of a graph parameter of the input graphs versus their product and for this reason, we  consider the damage number of the Cartesian product of graphs. The \emph{Cartesian product} of graphs $G$ and $H$, written $G\square H$, is the graph with vertex set $V(G) \times V(H)$ specified by putting $(u,v)$  adjacent to $(u',v')$ if and only if (1) $u=u'$ and $v$ is adjacent to $v'$ in $H$ or (2) $v=v'$ and $u$ is adjacent to $u'$ in $G$. 
 
 We begin with a simple observation that implies $G \square H$ is not copwin, regardless of the properties of $G$ and $H$, and then conclude that there is always a strategy for the robber to damage ${\rm dmg}(G \square H)$ many vertices on $G \square H$ and avoid capture.  

\begin{proposition}\label{obs:nocorner} If $G$ and $H$ are graphs, then $G \square H$ contains no c-dominated vertices. 
 \end{proposition}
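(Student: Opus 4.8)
The plan is to fix an arbitrary vertex $(a,b) \in V(G \square H)$ and show directly that no other vertex can c-dominate it. Suppose toward a contradiction that $(a,b)$ is c-dominated, so there is a vertex $(c,d) \neq (a,b)$ with $N[(a,b)] \subseteq N[(c,d)]$. Since $(a,b) \in N[(a,b)]$, we must have $(a,b) \in N[(c,d)]$; as the two vertices are distinct, this forces $(a,b)$ to be adjacent to $(c,d)$ in $G \square H$.

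By the definition of the Cartesian product, adjacency of $(a,b)$ and $(c,d)$ means either (i) $a = c$ and $b$ is adjacent to $d$ in $H$, or (ii) $b = d$ and $a$ is adjacent to $c$ in $G$. These two cases are interchanged by swapping the roles of $G$ and $H$ (using $G \square H \cong H \square G$), so I would treat case (i) and obtain case (ii) by symmetry. In case (i) we have $(c,d) = (a,d)$ with $d$ adjacent to $b$ in $H$ and $d \neq b$.

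The key step is to exhibit a neighbour of $(a,b)$ that cannot lie in $N[(a,d)]$. Because $G$ is connected with at least two vertices, $a$ has a neighbour $a' \in N_G(a)$, and then $(a',b) \in N[(a,b)]$. I would next examine the three kinds of vertices comprising $N[(a,d)]$: the vertex $(a,d)$ itself, the vertices $(a'',d)$ with $a'' \in N_G(a)$, and the vertices $(a,d')$ with $d' \in N_H(d)$. Every vertex of the first and third kind has first coordinate $a \neq a'$, while every vertex of the second kind has second coordinate $d \neq b$; hence $(a',b)$ matches none of them, giving $(a',b) \notin N[(a,d)]$. This contradicts $N[(a,b)] \subseteq N[(a,d)]$, completing case (i), and case (ii) follows identically with the coordinates transposed. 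Since $(a,b)$ was arbitrary, $G \square H$ has no c-dominated vertices.

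The main obstacle is simply the coordinate bookkeeping in the final step: one must verify carefully that a neighbour of $(a,b)$ taken in the $G$-direction cannot be recovered from the closed neighbourhood of a vertex that differs from $(a,b)$ in the $H$-direction, and this is exactly where the two adjacency types of the product are forced apart. I would also note that the argument tacitly uses that each factor has at least two vertices, so that connectivity supplies the neighbour $a'$; this nontriviality is the natural standing assumption for the product, since a $K_1$ factor would merely reproduce the other graph.
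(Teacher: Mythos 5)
Your proof is correct and follows essentially the same route as the paper's: assume a c-dominating vertex exists, note it must be adjacent to the dominated vertex, reduce by symmetry to one of the two product-adjacency cases, and then exhibit a neighbour in the \emph{other} coordinate direction that cannot lie in the dominator's closed neighbourhood. The only differences are cosmetic --- you treat the case where the dominator differs in the $H$-coordinate while the paper treats the $G$-coordinate case, and you spell out the adjacency justification and the nontriviality of the factors, which the paper leaves implicit.
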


\begin{proof}For a contradiction, suppose $(x,y)$ is a c-dominated vertex in $G \square H$.  Then there exists some vertex $(u,v) \in V(G \square H)$ such that $N[(x,y)] \subseteq N[(u,v)]$.  Without loss of generality, suppose $u$ is adjacent to $x$ in $G$ and $y=v$ in $H$.  Since $H$ has no isolated vertices, there exists $z \in V(H) \backslash \{y\}$ that is adjacent to $y$.  Then vertex $(x,z)$ exists and is adjacent to $(x,y)$ in $G \square H$.  But $(u,y)$ is not adjacent to $(x,z)$ in $G \square H$ because $y \neq z$ and $u \neq x$.  Consequently, $(x,z) \in N[(x,y)]$ but $(x,z) \not\in N[(u,v)]$ and we have a contradiction. \end{proof}

Observe that Proposition~\ref{obs:nocorner} coupled with Observation~\ref{obs:no-c} implies there exists a strategy for the robber to damage ${\rm dmg}(G \square H)$ vertices and avoid capture.  We assume throughout this section that the robber will be using such a strategy on $G \square H$.

\subsection{Products of general graphs}\label{Sec2p1}

It is convenient to use cop strategies for games on $G$ and $H$ to create a cop strategy on $G \square H$.  The difficulty is that any move by the cop on $G \square H$ corresponds to a cop move on either $G$ or $H$.  If the cop's first move is to change their first coordinate on $G \square H$, we observe this corresponds to the cop moving in the game played on $G$.  Suppose the robber's first move is to change their second coordinate on $G \square H$; this corresponds to the robber moving in the game played on $H$.  However, in terms of the subgame on $H$, it corresponds to the situation on $H$ where the cop has initially passed and the robber moves first.  As an example, from~\cite{CS2019}, we know ${\rm dmg}(C_m) = \lfloor \frac{m-1}{2}\rfloor$ and therefore ${\rm dmg}(C_6)=2$.  However, if the robber were allowed to move first, the robber could initially occupy a vertex adjacent to the cop and damage ${\rm dmg}(C_6)+1=3$ vertices.  In contrast, if the robber were allowed to move first on $C_5$, they would still only be able to damage ${\rm dmg}(C_5)=2$ vertices.  We will explore cycles in depth in Section~\ref{subsec:cycles}, but this motivates the following question. 

\begin{question} For which graphs $H$ can the cop pass during the first round and still prevent the robber from damaging more than ${\rm dmg}(H)$ vertices? \end{question}

Let $\rm{dmg}'(H)$ denote the minimum number of vertices damaged on graph $H$, when the cop passes during the first round. We can easily see that  \begin{equation*} {\rm dmg}'(H) \in \big\{ {\rm dmg}(H), {\rm dmg}(H)+1\big\}.\end{equation*}

Let $H$ be a graph and $x$ be a vertex such that if the cop initially occupies $x$, then the cop can prevent the robber from damaging more than ${\rm dmg}(H)$ vertices.  Suppose the robber initially occupies some vertex $y \in V(H)$ where $y \neq x$.  The cop initially passes and the robber damages $y$ and moves to $z$ (it is possible that $y=z$).  From the positions of $x$ and $z$, the cop will move next and play proceeds as normal, leaving the robber unable to damage more than ${\rm dmg}(H)$ vertices in addition to $y$. Thus, ${\rm dmg}'(H) \leq {\rm dmg}(H)+1$.  Certainly ${\rm dmg}'(H) \geq {\rm dmg}(H)$ because the robber could adopt the same strategy and also initially pass on their first turn, then they damage at least ${\rm dmg}(H)$ vertices. 

\begin{lemma}\label{lem:Phase2} Let $G$ and $H$ be graphs.  In $G \square H$, suppose that during some round, the cop and robber occupy vertices with the same first coordinate and it is the robber's turn to move.  The robber can damage at most an additional ${\rm dmg}'(H)|V(G)|$ vertices of $G \square H$.\end{lemma}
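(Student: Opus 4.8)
The plan is to use a \emph{shadow strategy} in which the cop projects the game onto the second coordinate, plays an optimal ${\rm dmg}'(H)$-strategy there, and simultaneously keeps its first coordinate matched to the robber's. Write each vertex of $G\square H$ as a pair and let $\pi(u,v)=v$ denote projection onto the $H$-factor. A single move in $G\square H$ either changes the first coordinate (a \emph{$G$-move}) or the second coordinate (an \emph{$H$-move}); under $\pi$ a $G$-move or a pass becomes a pass in $H$, while an $H$-move becomes a genuine move in $H$. Thus the robber induces a \emph{virtual robber} on $H$, namely its projected position, and whenever the robber damages $(u,v)$ in $G\square H$ the virtual robber damages $v$ in the projected game. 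Consequently, if the cop can force the virtual robber to damage at most ${\rm dmg}'(H)$ distinct vertices of $H$, then the set of second coordinates ever damaged has size at most ${\rm dmg}'(H)$; since each such coordinate occurs in at most $|V(G)|$ fibers, the total number of damaged vertices is at most ${\rm dmg}'(H)\,|V(G)|$, as desired. So the entire task reduces to controlling the damaged second coordinates.

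To realize this, fix a vertex $x\in V(H)$ and a strategy $\sigma$ witnessing ${\rm dmg}'(H)$ from $x$ (cop at $x$, cop passes, robber moves first, matching the hypothesis that it is the robber's turn); I take the cop's second coordinate on entering this phase to equal $x$, which is the position the preceding setup is designed to provide. We may also assume $\sigma$ answers a robber pass with a cop pass, since a pass changes neither player's position nor the damaged set and can be inserted freely without affecting the damage $\sigma$ guarantees. The cop then maintains the invariant that, immediately after each of its moves, its first coordinate equals the robber's and its second coordinate equals the position of $\sigma$'s cop against the virtual robber. The cop's replies are: if the robber just made an $H$-move, the cop replies with $\sigma$'s move, which leaves both first coordinates fixed and hence still matched; if the robber made a $G$-move or passed, the virtual robber has passed, so $\sigma$ passes, and the cop instead re-matches the first coordinate, which it can do in one move because its first coordinate agrees with the robber's previous one and the robber traversed a $G$-edge.

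It remains to verify that $\sigma$'s guarantee transfers to $G\square H$, i.e. that $\sigma$'s capturing threats are executable there. The crucial point is that $\sigma$ only ever needs to act, and in particular to capture, in response to an $H$-move of the robber, and at such moments the first coordinates are equal by the invariant; hence if $\sigma$'s move captures the virtual robber, the cop's corresponding $H$-move lands on the robber's vertex (equal first coordinate, equal second coordinate) and captures in $G\square H$ as well. Robber $G$-moves and passes leave the virtual robber in place and so never enlarge the set of damaged second coordinates; they merely let the robber revisit one second coordinate across distinct $G$-fibers, which the factor $|V(G)|$ absorbs. Since $\sigma$ guarantees at most ${\rm dmg}'(H)$ damaged vertices in $H$ against \emph{every} virtual-robber play, the set of second coordinates damaged in $G\square H$ has size at most ${\rm dmg}'(H)$, giving the bound ${\rm dmg}'(H)\,|V(G)|$.

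The hard part will be the bookkeeping forced by the interleaving of the two coordinates, since the cop changes only one coordinate per move. One must check that a single re-matching move always suffices after a robber $G$-move (it does, for the reason given above) and that the first coordinates, which are momentarily unequal between a robber $G$-move and the cop's reply, are never unequal at an instant when the robber could exploit an $H$-threat (they are not, since the robber does not move again until after the cop has re-matched). The pass-on-pass normalization of $\sigma$ is the device reconciling the two jobs, tracking the $H$-strategy and tracking the first coordinate, within one move per round. I would also be careful to record explicitly that the argument relies on the cop's second coordinate being a position from which $\sigma$ attains ${\rm dmg}'(H)$; without that hypothesis the projected bound can fail, so this is exactly the configuration the phase must begin in.
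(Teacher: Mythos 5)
Your proof is correct and takes essentially the same approach as the paper's: the cop mirrors whichever coordinate the robber last changed, re-matching the first coordinate after a $G$-move and playing a ${\rm dmg}'(H)$-optimal strategy in the second coordinate after an $H$-move, so that all damaged vertices lie in at most ${\rm dmg}'(H)$ fibers of size $|V(G)|$. Your explicit caveat that the cop's second coordinate must be a vertex from which ${\rm dmg}'(H)$ is achievable is a hypothesis the paper leaves implicit (it is supplied by the way the lemma is invoked in Theorems~\ref{thm:oneuni} and~\ref{thm:upperH}), but this does not change the substance of the argument.
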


\begin{proof} In $G \square H$, suppose the cop and robber initially occupy vertices with the same first coordinate and it is the robber's turn to move.  Let $|V(G)|=m$ and observe that graph $G \square H$ has $m$ vertex-disjoint subgraphs, denoted $H_1,H_2,\dots,H_m$ such that $H_i \cong H$ and $\cup_{i=1}^m V(H_i)=V(G \square H)$.  Then every vertex in $H_i$ has the same first coordinate and so the cop and robber initially both occupy vertices in $H_i$ for some $i \in \left\{1, \dots,m\right\}$.  We refer to $H_1,H_2,\dots,H_m$ as {\it copies} of $H$.

The cop's strategy is as follows: whenever the robber changes their first coordinate, the cop changes their first coordinate to match.  This will ensure that after the cop moves at each round, the cop and robber will occupy the same copy of $H$.  Whenever the robber changes their second coordinate, the cop changes their second coordinate, following a cop strategy that minimizes the damage on $H$. 

At worst, every time the robber changes their second coordinate (i.e. moves within a copy of $H$), the robber could then change their first coordinate $m$ times (and damage $|V(G)|$ many vertices) before changing the second coordinate again.  This results in at most $({\rm dmg}'(H))\cdot(m)$ vertices damaged.  We note that since the cop's strategy is to always change the same coordinate the robber previously changed, the robber will be the first to change their second coordinate (i.e. move within a copy of $H$) and thus, can damage ${\rm dmg}'(H)$, rather than ${\rm dmg}(H)$ many vertices in each copy of $H$. \end{proof} 

Naturally, it follows from Lemma~\ref{lem:Phase2} that if the cop and robber occupy vertices with the same second coordinate and it's the robber's turn to move, then the robber can damage at most ${\rm dmg}'(G)|V(H)|$ additional vertices of $G \square H$. However, for both this and later results in this subsection, we do not formally state the symmetric result. 

\begin{theorem}\label{thm:oneuni} Let $G$ and $H$ be graphs. If both $G$ and $H$ have universal vertices, then \[{\rm dmg}(G \square H) \leq \min \{|V(G)|,|V(H)|\}.\]  
\end{theorem}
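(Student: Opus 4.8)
The plan is to exploit the two universal vertices to bring the cop into the configuration of Lemma~\ref{lem:Phase2} on its very first move, and then to observe that for a graph with a universal vertex the relevant quantity ${\rm dmg}'$ collapses to $1$. Let $g^*$ be universal in $G$ and $h^*$ be universal in $H$, and start the cop at $(g^*,h^*)$. Suppose the robber occupies $(a_0,b_0)$ after round $0$. Since $g^*$ is adjacent to every vertex of $G$, the cop can move in a single step to $(a_0,h^*)$, so that after the cop's first move the cop and robber share the first coordinate $a_0$ and it is the robber's turn. Crucially, no vertex has yet been damaged, since the robber has not moved.

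The key supporting fact I would establish is that ${\rm dmg}'(H)\le 1$ whenever $H$ has a universal vertex. Placing the cop on the universal vertex of $H$ lets it move onto the robber on its first move, before the robber moves, so ${\rm dmg}(H)=0$; combined with the bound ${\rm dmg}'(H)\le {\rm dmg}(H)+1$ established earlier in this section, this gives ${\rm dmg}'(H)\le 1$. I would then apply Lemma~\ref{lem:Phase2} directly to the configuration reached after the cop's first move: the robber can damage at most ${\rm dmg}'(H)\,|V(G)|\le |V(G)|$ vertices in total. By the symmetric statement recorded immediately after Lemma~\ref{lem:Phase2}, having the cop instead match the robber's \emph{second} coordinate on its first move (which is possible precisely because $h^*$ is universal in $H$) and using ${\rm dmg}'(G)\le 1$ yields the bound $|V(H)|$. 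Since the cop may commit to whichever of the two strategies is cheaper, this gives ${\rm dmg}(G\square H)\le \min\{|V(G)|,|V(H)|\}$.

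The main thing to verify carefully is that the single ``catch-up'' move genuinely lands the game in the hypothesis of Lemma~\ref{lem:Phase2} with no damage charged beforehand, because this is exactly where both universal-vertex hypotheses are consumed: one factor's universal vertex supplies the cop's one-move jump to the correct coordinate, while the other factor's universal vertex is what forces ${\rm dmg}'\le 1$. I would also check the degenerate cases separately, for instance $a_0=g^*$ (where the cop may simply pass into the required configuration) and $|V(H)|=1$, to confirm that the reduction to Lemma~\ref{lem:Phase2} is valid in every case.
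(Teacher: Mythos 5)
Your proof is correct and takes essentially the same approach as the paper: start the cop on the product of the two universal vertices, use one factor's universal vertex to match the robber's coordinate in a single move, and then invoke Lemma~\ref{lem:Phase2} together with the fact that ${\rm dmg}'$ of the other factor is at most $1$, taking the cheaper of the two symmetric strategies. Your explicit justification of ${\rm dmg}'(H)\le 1$ and of the degenerate cases only spells out what the paper leaves implicit.
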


\begin{proof}
Suppose $G$ and $H$ have universal vertices $u$ and $v$, respectively.  The cop initially occupies $(u,v)$ in $G \square H$.  If the cop initially changes their first coordinate to match that of the robber, then by Lemma~\ref{lem:Phase2}, the robber can damage ${\rm dmg}'(H)|V(G)|$ vertices.  Note that ${\rm dmg}'(H)=1$ since $H$ has a universal vertex.  Thus, the robber can damage at most $|V(G)|$ vertices.  By a similar argument, if the cop initially changes their second coordinate to match that of the robber, the robber can then damage at most $|V(H)|$ vertices.
\end{proof}

\begin{theorem}\label{thm:prodK} Let $K_m$ and $K_n$ be complete graphs on $m$ and $n$ vertices, respectively.  If $n \geq m \geq 3$ then ${\rm dmg}(K_m \square K_n) = m$.\end{theorem}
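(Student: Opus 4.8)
The plan is to prove the two inequalities separately; the bound $\mathrm{dmg}(K_m\square K_n)\le m$ is essentially immediate, and the real content is the matching lower bound. For the upper bound I would simply invoke Theorem~\ref{thm:oneuni}: every vertex of a complete graph is universal, so both $K_m$ and $K_n$ have universal vertices, giving $\mathrm{dmg}(K_m\square K_n)\le\min\{|V(K_m)|,|V(K_n)|\}=\min\{m,n\}=m$ since $n\ge m$.

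For the lower bound I would set up coordinates, viewing $K_m\square K_n$ as the $m\times n$ \emph{rook's graph}: vertices $(i,j)$ with $i\in\{1,\dots,m\}$ (a ``row'') and $j\in\{1,\dots,n\}$ (a ``column''), where $(i,j)$ is adjacent to $(i',j')$ exactly when $i=i'$ or $j=j'$. By Proposition~\ref{obs:nocorner} together with Observation~\ref{obs:no-c}, the robber can realize the damage number while avoiding capture, so I am free to design a capture-avoiding robber strategy and argue it forces $m$ distinct damaged cells. The central observation is that, relative to a cop at $(a,b)$, a cell is \emph{safe} (the cop cannot capture it next turn) precisely when it lies in neither row $a$ nor column $b$, i.e. at distance $2$; hence the robber must always move to a distance-$2$ cell. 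I would have the robber always move to a fresh (undamaged) safe cell whenever one is adjacent. Because the robber then always begins its turn on a fresh cell and vacates it, each robber move damages a brand-new vertex, so the damaged set $D$ grows by exactly $1$ per turn; it therefore suffices to show the robber can keep finding a fresh safe neighbour until $|D|=m$.

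The key step is a local counting lemma: while $|D|<m$ and the robber occupies a fresh cell $u=(r,c)$ that has not been captured, either $u$ has a fresh safe neighbour, or leaving $u$ already forces $|D|\ge m$. I would split on how the cop's most recent move relates to the robber, using that after a single cop move the cop shares at most one of $\{$row $r$, column $c\}$ with the robber. If the cop shares row $r$, the other $m-1$ cells of column $c$ are all safe; if it shares column $c$, the other $n-1$ cells of row $r$ are all safe; if it shares neither, there are $(m-2)+(n-2)=m+n-4$ safe neighbours. In each case, if every safe neighbour already lies in $D$, then $|D|$ is at least $m-1$, $n-1\,(\ge m-1)$, and $m+n-4\,(\ge m-1)$ respectively, so vacating the fresh cell $u$ pushes $|D|$ to at least $m$; otherwise a fresh safe neighbour exists and the robber makes progress. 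The hypotheses $m,n\ge 3$ guarantee that a safe neighbour always exists (so capture is genuinely avoidable) and make the boundary arithmetic close, which is exactly where the excluded small cases such as $K_2\square K_2=C_4$ break down.

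I expect the main obstacle to be the bookkeeping in this case analysis: verifying the tight inequalities in the ``shares neither'' subcase when $n=m=3$, and making precise the inductive invariant that the robber always starts its turn on an undamaged cell, so that every move contributes a genuinely new damaged vertex and the count reliably reaches $m$.
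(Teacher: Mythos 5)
Your proposal is correct and follows essentially the same route as the paper: the upper bound is quoted from Theorem~\ref{thm:oneuni}, and the lower bound rests on exactly the same count of safe (distance-$2$) neighbours of the robber's current vertex ($m-1$, $n-1$, or $m+n-4$, each at least $m-1$ under the hypotheses $n\geq m\geq 3$). The only difference is packaging: the paper argues by contradiction that a robber who is stuck after damaging $k<m$ vertices would still have an undamaged safe neighbour, whereas you run the contrapositive as an explicit greedy strategy with a freshness invariant.
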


\begin{proof} The upper bound follows from Theorem~\ref{thm:oneuni}.  For a lower bound, we suppose that ${\rm dmg}(K_m \square K_n) = k$, where $k  < m$ and obtain a contradiction.  By Observation~\ref{obs:no-c}, we can assume the robber uses a strategy to damage $k$ vertices and avoid capture.  By some round $t$, the robber has damaged $k$ vertices and the robber cannot move to an undamaged vertex without being captured. Suppose that during round $t$, the cop occupies $(x,y)$, the robber occupies $(u,v)$ and it is the robber's turn to move. When the robber moves, $(u,v)$ is damaged, so to obtain a contradiction, we need only observe that $(u,v)$ has at least $k$ neighbours that are not also adjacent to $(x,y)$.     

If $x=u$, observe $$\Big| N[(u,v)]\backslash N[(u,y)]\Big| = \Big| \{ (z,v): z \in V(K_m) \}\Big| = m-1 \geq k.$$

If $x \neq u$ and $y \neq v$, observe \begin{eqnarray*}\Big| N[(u,v)] \backslash N[(x,y)]\Big| &=& \Big| \big\{ (u,z): z \in V(K_n)\backslash \{y\}\big\} \cup \big\{ (z,v): z \in V(K_m)\backslash \{x\}\big\} \Big|\\ &=& m+n-3 \geq k.\end{eqnarray*} Thus, the robber can move to an undamaged vertex that is not adjacent to the cop and, regardless of the cop's next move, the robber will damage a $(k+1)^{th}$ vertex.\end{proof}

From Theorem~\ref{thm:prodK}, we see that the bound provided in Theorem~\ref{thm:oneuni} is exact for some graphs.  We note, however, that this is not always the case.  If $u,v$ are universal vertices in $K_{1,m},K_{1,n}$ respectively, a cop can occupy vertex $(u,v)$ in $K_{1,m} \square K_{1,n}$ which o-dominates every vertex to which it is not adjacent.  Consequently, ${\rm dmg}(K_{1,m} \square K_{1,n})=1$.     
We next consider arbitrary graphs $G$ and $H$.

\begin{theorem}\label{thm:upperH} For any graphs $G$ and $H$, $${\rm dmg}(G \square H) \leq \max \Big\{ {\rm dmg}(G)|V(H)|, {\rm dmg}'(H)|V(G)|\Big\}.$$ \end{theorem}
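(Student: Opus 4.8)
The plan is to have the cop commit to a single damage‑minimizing strategy on $G$, used to control only the \emph{first} coordinate, and to show that the two quantities in the maximum correspond to the two ways this can play out. First I would fix the $m=|V(G)|$ copies $H_1,\dots,H_m$ of $H$ as in Lemma~\ref{lem:Phase2}, so that the first coordinate of a vertex records which copy contains it. The cop begins at a vertex whose first coordinate is the start of an optimal damage‑minimizing strategy for the game on $G$, and thereafter treats the robber's first coordinate as a robber token on $G$: whenever the robber changes its first coordinate (a move in the $G$‑game), the cop responds with the prescribed $G$‑strategy move; whenever that strategy calls for a pass, the cop is free to spend its move on the second coordinate instead. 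The point of this bookkeeping is that the cop can always execute the $G$‑strategy faithfully, and since the cop moves first in every round, the simulated game on $G$ is one in which the cop moves first, so the robber's first coordinate damages at most ${\rm dmg}(G)$ (rather than ${\rm dmg}'(G)$) vertices of $G$.

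Next I would establish the dichotomy. So long as the cop's and robber's first coordinates never coincide, the simulation is a legitimate game on $G$ in which the robber token is never captured, hence damages at most ${\rm dmg}(G)$ distinct vertices of $G$; moreover every vertex of $G \square H$ the robber damages lies in one of these copies $H_i$, because when the robber moves or passes at $(g,h)$ its first coordinate $g$ is simultaneously damaged in the $G$‑game. In this case the damage is confined to at most ${\rm dmg}(G)$ copies of $H$, giving at most ${\rm dmg}(G)|V(H)|$ damaged vertices, the first term. Otherwise the first coordinates coincide at some round; after the cop's move (passing in the first coordinate if it was the robber who moved onto the cop's copy) we reach a position with equal first coordinates and the robber to move, which is exactly the hypothesis of Lemma~\ref{lem:Phase2}. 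At that instant the cop abandons the $G$‑strategy and adopts the Phase~2 strategy of that lemma, shadowing the robber's first coordinate and playing a damage‑minimizing $H$‑strategy on the second coordinate, bounding the subsequent damage by ${\rm dmg}'(H)|V(G)|$, the second term.

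The main obstacle is the interface between these two regimes. The subtlety is that ``capture'' in the simulated game on $G$ is \emph{not} capture in $G \square H$: once the first coordinates agree, the robber can change its first coordinate again and escape into a fresh copy of $H$, so the cop genuinely must switch strategies at the instant of coincidence, and the confinement argument does not survive past that instant. The delicate point is then to show that the final bound is the maximum of the two terms rather than their sum, i.e.\ that the damage done \emph{before} the coincidence is already charged within the tally that Lemma~\ref{lem:Phase2} controls \emph{afterwards}. I would handle this by having the cop avoid coincidence for as long as the confinement strategy keeps damage inside ${\rm dmg}(G)|V(H)|$, invoking Lemma~\ref{lem:Phase2} only when the robber forces the first coordinates together; the robber's decision to do so is precisely the choice between the two regimes, and a careful reading of which vertices have already been damaged at the transition lets one attribute the whole game to a single term. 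Verifying that this accounting truly yields the maximum, and not the sum, is the step I expect to require the most care.
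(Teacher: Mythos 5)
There is a genuine gap, and you have correctly located it yourself: the sum-versus-maximum accounting at the transition is not a detail that ``careful reading'' can repair --- it is exactly the point where your strategy fails, because your cop controls only the first coordinate before the transition. Against your cop, the robber can play as follows: keep the first coordinate fixed and sweep an entire copy of $H$ (your cop's second-coordinate behaviour is unspecified, so nothing confines this), move the first coordinate to a new vertex permitted by the simulated $G$-game, sweep again, and so on, accumulating close to ${\rm dmg}(G)|V(H)|$ damage spread over full copies of $H$; then deliberately move its first coordinate \emph{onto} the cop's first coordinate. This is suicide in the simulated $G$-game but is perfectly safe in $G \square H$ (the second coordinates differ), and it forces exactly the coincidence that triggers your appeal to Lemma~\ref{lem:Phase2}. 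That lemma then only bounds the \emph{additional} damage by ${\rm dmg}'(H)|V(G)|$, and this additional damage lives in fresh copies of $H$ (first coordinates never damaged in the $G$-game), so the two damaged sets genuinely do not nest: the total approaches the sum, not the maximum. The paper's proof avoids this by a structurally different cop strategy: from the start the cop mirrors the robber in \emph{both} coordinates --- answering a first-coordinate move with an optimal $G$-strategy move, a second-coordinate move with an optimal $H$-strategy move, and a pass with a pass. This confines the robber to the intersection $S_G \cap S'_H$ of the two damage cylinders; the robber can only ever exit one cylinder, at which point the cop matches the corresponding coordinate and thereafter capture threats are real, so \emph{all} damage, including everything done before the transition, lies inside the single surviving cylinder. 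That containment of past damage is what produces the maximum, and it is unavailable to a cop who left the second coordinate uncontrolled.

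A secondary error: you claim that because the cop moves first in every round of $G \square H$, the simulated game on $G$ is cop-first and hence bounded by ${\rm dmg}(G)$ rather than ${\rm dmg}'(G)$. Under your purely reactive scheme (the cop only makes a $G$-move \emph{after} the robber changes its first coordinate), the simulated $G$-game actually has the cop passing in its first round, which is precisely the definition of ${\rm dmg}'(G)$; so even your first term would degrade to ${\rm dmg}'(G)|V(H)|$. The paper handles this by having the cop make a proactive $G$-strategy move in round $1$, before reacting thereafter --- this is also why the theorem's bound is asymmetric, with ${\rm dmg}(G)$ on one side and ${\rm dmg}'(H)$ on the other.
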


\begin{proof} Let $G$ be a graph on $m$ vertices, labeled $u_0,\dots,u_{m-1}$.  Without loss of generality, suppose that by occupying $u_0$ and moving first, as usual, the cop can prevent more than ${\rm dmg}(G)$ vertices from being damaged.  Let $H$ be a graph on $n$ vertices, labeled $v_0,\dots,v_{n-1}$.  Suppose that by occupying $v_0$ and passing during the first round, the cop can prevent more than ${\rm dmg}'(H)$ vertices from being damaged. In $G \square H$, the cop will initially occupy $(u_0,v_0)$.

On $G \square H$, the cop's strategy will often be to mirror optimal cop moves of a game played on $G$ or $H$.  
 
 In $G \square H$, suppose $u_p$ and $u_x$ are the first coordinates of the cop and robber's positions, respectively during some round.   On $G$, suppose the cop and robber occupy $u_p$ and $u_x$ (respectively) and an optimal strategy for the cop in $G$ is to next move to $u_\alpha$. In $G \square H$, if the cop then changes first coordinate to $u_\alpha$, then we say {\it the cop changes their first coordinate according to an optimal cop-strategy for $G$}.

Suppose the cop occupies $(u_0,v_0)$, the robber occupies $(u_x,v_y)$, and it is the cop's turn to move. During the first round (i.e. when $t=1$), the cop changes the first coordinate in $G \square H$ according to an optimal cop-strategy for $G$.  If this results in the cop matching the first coordinate of the robber, then by Lemma~\ref{lem:Phase2}, the robber can damage at most ${\rm dmg}'(H)|V(G)|$ vertices on $G \square H$.  Otherwise, for $t > 1$,

\begin{enumerate}[(1)] 

\item If the cop can move during round $t$ to match first or second coordinate to that of the robber, then the cop will do so.

\item Otherwise, 

\begin{enumerate}
	\item if the robber changed their first coordinate during round $t-1$, then during round $t$, the cop changes their first coordinate according to an optimal cop-strategy on $G$.
	
	\item if the robber changed their second coordinate during round $t-1$, then during round $t$, the cop changes their second coordinate according to an optimal cop-strategy on $H$.
	
	\item if the robber passed during round $t-1$, the cop will pass during round $t$.  
\end{enumerate}
\end{enumerate}

Let $D(G)$ be a set of ${\rm dmg}(G)$ vertices that can be damaged by the robber on graph $G$ if the cop and robber initially occupy $u_0$ and $u_x$, respectively.  Let $D'(H)$ be a set of ${\rm dmg}'(H)$ vertices that can be damaged by the robber on graph $H$ if the cop and robber initially occupy $v_0$ and $v_y$, respectively; and the cop initially passes.

Let $$S_G = \Big\{ (u_i,v_j) : u_i \in D(G), j \in \{0,1,\dots,n-1\} \Big\}$$ and $$S'_H = \Big\{ (u_i,v_j) : i \in \{0,1,\dots,m-1\}, v_j \in D'(H) \Big\}$$ and observe that $(u_x,v_y) \in S_G \cap S'_H.$ Note that the strategy of (2) restrict the robber to vertices in $S_G \cup S'_H$.  If the robber only ever occupies vertices of $S_G \cap S'_H$, then the result holds because $$\big|S_G \cap S'_H\big| \leq \max\Big\{ |S_G|, |S'_H|\Big\} \leq \max \Big\{ {\rm dmg}(G)|V(H)|, {\rm dmg}'(H)|V(G)|\Big\}.$$

Suppose that during some round, the robber moves to a vertex of $S_G \backslash S'_H$; observe that to do this, the robber changes their second coordinate.  The cop is then able to move to match the second coordinate of the robber.  Observe that by (1) and (2), the robber will be captured if the robber moves to a vertex outside $S_G$.  Thus, the robber can damage at most $|S_G| = {\rm dmg}(G)|V(H)|$ vertices and the result holds.  Similarly, if the robber instead moves from a vertex in $S_G \cap S'_H$ to a vertex in $S'_H \backslash S_G$, then the cop will move to match the first coordinate of the robber.  Then again, the robber will be captured if the robber moves to a vertex outside $S'_H$.  Consequently, the robber can damage at most $|S'_H| = {\rm dmg}'(H)|V(G)|$ vertices and the result holds.\end{proof} 

An illustration of sets $S_G$ and $S'_H$ is given in Figure~\ref{fig:cycles} for $C_{10} \square C_{13}$. Finally, we restate Theorem~\ref{thm:upperH} in terms of the damage numbers of $G$ and $H$.

\begin{figure}[htbp]
\[ \includegraphics[width=0.725\textwidth]{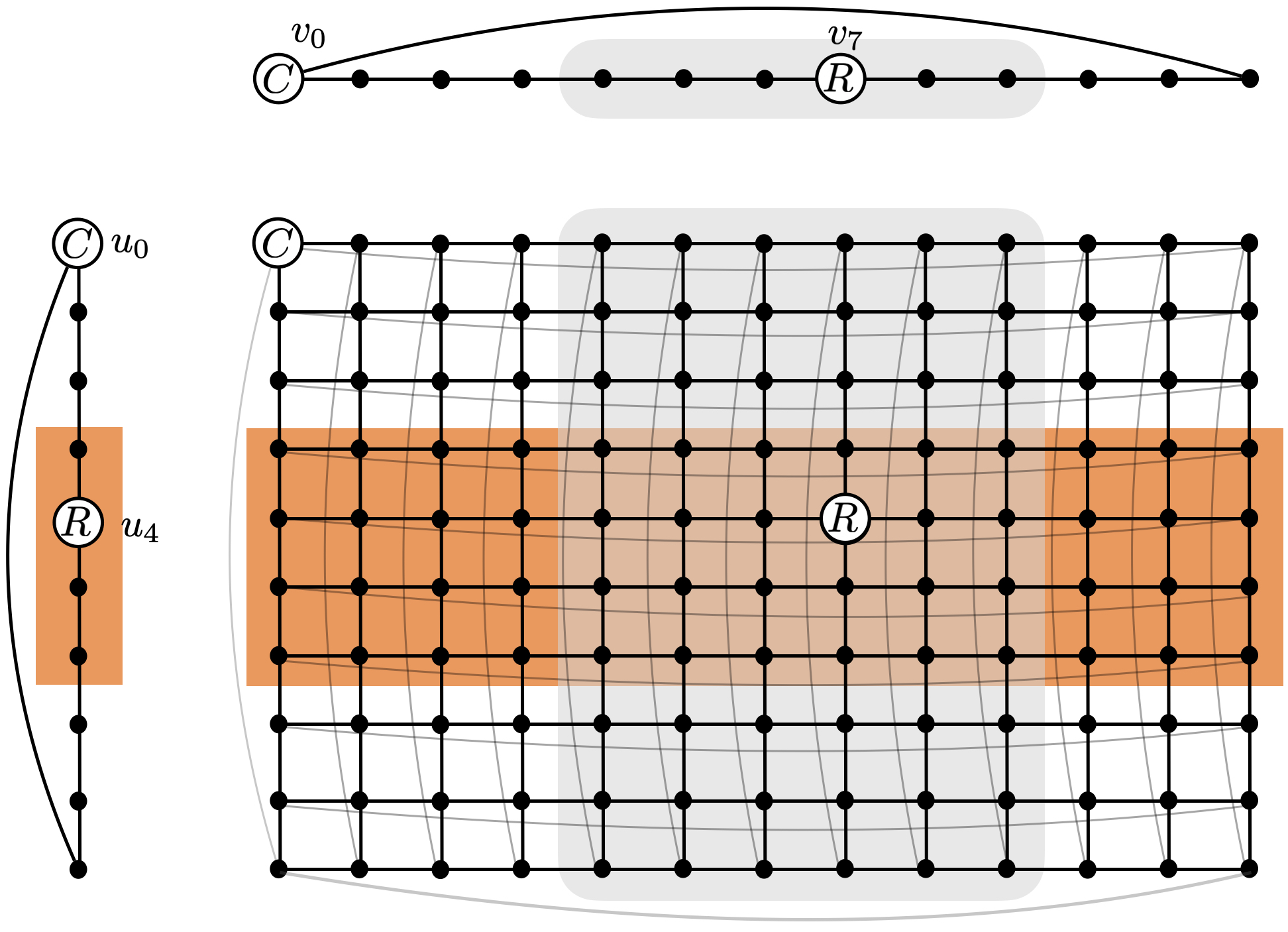} \]

\caption{An example of the set $D(C_{10})$ when $p=0$ and $x=4$ (rectangular region in left graph); $D'(C_{13})$ when $q=0$ and $y=7$ (grey region in top graph); and the corresponding sets $S_{C_{10}}$ (rectangular region in $C_{10} \square C_{13}$) and $S'_{C_{13}}$ (grey region in $C_{10} \square C_{13}$).}

\label{fig:cycles}

\end{figure}

\begin{corollary}\label{corcor} For any graphs $G$ and $H$, $${\rm dmg}(G \square H) \leq \max \Big\{ {\rm dmg}(G)|V(H)|, \big({\rm dmg}(H)+1\big)|V(G)|\Big\}.$$ \end{corollary}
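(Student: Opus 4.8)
The plan is to derive the statement directly from Theorem~\ref{thm:upperH}, which already supplies the bound ${\rm dmg}(G \square H) \leq \max\{{\rm dmg}(G)|V(H)|, {\rm dmg}'(H)|V(G)|\}$. The only work remaining is to replace the quantity ${\rm dmg}'(H)$, which refers to the cop passing during the first round, by an expression purely in terms of ${\rm dmg}(H)$. This is precisely a reformulation of the theorem rather than a genuinely new estimate.

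First I would recall the elementary bound ${\rm dmg}'(H) \leq {\rm dmg}(H)+1$ established in the discussion preceding Lemma~\ref{lem:Phase2}: allowing the cop to pass during the first round costs at most one additional damaged vertex, since after the robber damages its initial vertex $y$ and moves, play proceeds exactly as in an optimal game from the resulting positions. Multiplying through by the nonnegative integer $|V(G)|$ yields ${\rm dmg}'(H)|V(G)| \leq ({\rm dmg}(H)+1)|V(G)|$.

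Finally, since the maximum of two quantities is nondecreasing in each argument, substituting this inequality into the bound from Theorem~\ref{thm:upperH} gives
$${\rm dmg}(G \square H) \leq \max\Big\{ {\rm dmg}(G)|V(H)|, {\rm dmg}'(H)|V(G)|\Big\} \leq \max \Big\{ {\rm dmg}(G)|V(H)|, \big({\rm dmg}(H)+1\big)|V(G)|\Big\},$$
as desired. I do not anticipate any substantive obstacle here; the only point that requires care is to make the replacement on the correct side of the inequality, that is, to use an \emph{upper} bound for ${\rm dmg}'(H)$, which is exactly what preserves the upper bound on ${\rm dmg}(G \square H)$ under the monotonicity of $\max$.
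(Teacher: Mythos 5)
Your proposal is correct and matches the paper's own (implicit) argument exactly: the paper presents Corollary~\ref{corcor} as a direct restatement of Theorem~\ref{thm:upperH}, obtained by replacing ${\rm dmg}'(H)$ with the upper bound ${\rm dmg}(H)+1$ established in the discussion preceding Lemma~\ref{lem:Phase2}. Your care about substituting an upper bound on the correct side of the inequality, using monotonicity of $\max$, is exactly the (short) content of this corollary.
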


\subsection{Products of trees}\label{subsec:trees}

We begin by determining the damage number of a tree and then introduce some notation and terminology used to prove ${\rm dmg}(T \square T')={\rm rad}(T \square T')-1$ for any finite trees $T$ and $T'$.

\begin{corollary}\label{cor:tree} For any tree $T$, ${\rm dmg}(T) = {\rm rad}(T)-1$.\end{corollary}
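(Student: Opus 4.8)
The plan is to prove $\mathrm{dmg}(T) = \mathrm{rad}(T)-1$ for any tree $T$ by establishing matching bounds. The lower bound $\mathrm{dmg}(T) \geq \mathrm{rad}(T)-1$ is immediate from Theorem~\ref{thm:newlowerbound}, which holds for all graphs, so no work is needed there. The entire content of the proof is therefore the upper bound $\mathrm{dmg}(T) \leq \mathrm{rad}(T)-1$, for which I will exhibit an explicit cop strategy and show it damages at most $\mathrm{rad}(T)-1$ vertices.

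First I would have the cop start at a center $c$ of $T$, i.e.\ a vertex realizing the radius, so that $\mathrm{dist}(c,w) \leq \mathrm{rad}(T)$ for every vertex $w$. The strategy, as foreshadowed in the introduction, is simply for the cop to move one step at a time along the unique shortest (hence only) path in $T$ toward the robber's current vertex. The key structural fact I would invoke is that in a tree there is a unique path between any two vertices, so ``moving toward the robber'' is unambiguous, and because the robber can only damage a vertex by then moving to a neighbour, the robber is eventually forced onto a leaf with the cop on the adjacent stem. The core claim to verify is that the set of damaged vertices is contained in a shortest path from $c$ to some leaf, excluding $c$ itself.

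The main step is a distance/potential argument. I would track $\mathrm{dist}(\text{cop},\text{robber})$ and argue that the cop's tree-pursuit strategy is non-increasing in this quantity and that the robber, to damage a new vertex, must move strictly farther from $c$ along a root-to-leaf branch. Since the cop closes distance each time the robber does not flee and tracks the branch when the robber flees, the robber can only ever damage vertices lying on a single path emanating from $c$; the number of such vertices other than $c$ is at most the length of that path, which is at most $\mathrm{rad}(T)$. The delicate point I anticipate being the main obstacle is the off-by-one: the naive bound gives $\mathrm{rad}(T)$ damaged vertices, but I need $\mathrm{rad}(T)-1$. I would resolve this by observing that when the cop sits at the center and the robber moves first toward a leaf at distance $\mathrm{rad}(T)$, the cop gains ground so that the robber reaches the leaf having damaged only the $\mathrm{rad}(T)-1$ internal vertices of the branch before capture (or before being trapped on the final stem), as described for trees in the introduction. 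Making this counting precise — carefully accounting for who moves first and confirming the center vertex and the terminal leaf are never among the newly damaged interior vertices — is where the argument must be handled with care.

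Alternatively, and perhaps more cleanly, I would phrase the upper bound inductively or by a direct adversary argument: at every round the cop maintains the invariant that it occupies the vertex on the robber-to-center path at distance one closer to the robber than in the previous round, so that after $k$ robber-advancing moves the cop is at distance $\mathrm{rad}(T)-k$ from the center along the robber's branch, forcing termination once the robber reaches the leaf. This reframing makes the $\mathrm{rad}(T)-1$ count fall out naturally from the gap between the cop's and robber's distances to the center, avoiding the awkward boundary bookkeeping of the potential-function version.
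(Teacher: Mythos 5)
Your lower bound is the same as the paper's (Theorem~\ref{thm:newlowerbound}), but your upper-bound argument has a genuine gap: the ``core claim'' that the set of damaged vertices is contained in a single shortest path from the center $c$ to a leaf is false. Consider a tree in which the center $c$ has one path branch of length ${\rm rad}(T)=r$ and a second branch $c,b_1,b_2,u$ where $u$ has two children $v$ and $v'$, with $v$ a leaf and $v'$ the top of a path descending to depth $r$ below $c$. Against the pursuit strategy, the robber may start at $v$; after the cop steps to $b_1$, the robber retreats to $u$ (damaging $v$), and when the cop reaches $b_2$ the robber, now adjacent to the cop, is forced to move again and chooses $v'$ (damaging $u$), then flees down the branch below $v'$. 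The damaged set contains the siblings $v$ and $v'$, which lie on no common path from $c$, so your path-containment invariant fails, and with it the claim that the robber must move strictly farther from $c$ to damage a new vertex (the robber damaged $v$ by moving \emph{toward} $c$). The count still comes out to $r-1$ in this example, but only because the zigzag wastes a robber move while the cop closes in; your invariant does not capture that trade-off. Your alternative phrasing at the end, with the cop ``at distance ${\rm rad}(T)-k$ from the center,'' is also not something the pursuit strategy satisfies: the chasing cop moves away from the center.

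The paper sidesteps this bookkeeping entirely. It proves ${\rm capt}(T)\le {\rm rad}(T)$ (the cop starts at a center and pursues, so the robber's territory --- the component of $T$ minus the cop's vertex containing the robber --- loses one level of depth with each cop move) and then invokes the inequality ${\rm dmg}(G)\le {\rm capt}(G)-1$ of Cox and Sanaei: the robber damages at most one vertex per round and damages nothing in the round it is captured, so at most ${\rm rad}(T)-1$ vertices are damaged, with no need to track where those vertices lie. If you want to repair your direct approach, the correct potential is exactly the one implicit in that argument --- after the cop's $t$-th move every vertex of the robber's territory is within distance ${\rm rad}(T)-t$ of the cop, forcing capture by round ${\rm rad}(T)$ --- at which point you have reproduced the capture-time proof.
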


\begin{proof}The lower bound follows from Theorem~\ref{thm:newlowerbound}.  For the upper bound, it suffices to show that ${\rm capt}(T)\leq {\rm rad}(T)$ as Cox and Sanaei~\cite{CS2019} pointed out that ${\rm dmg}(G) \leq {\rm capt}(G)-1$ for any copwin graph $G$.  Suppose the cop initially occupies a center $c$ of tree $T$.  If the robber initially occupies vertex $u$, the robber will be distance at most ${\rm rad}(T)$ away from the cop. The cop can then take the shortest path from $c$ to $u$ to capture the robber, which will take ${\rm rad}(T)$ or fewer rounds. Therefore ${\rm capt}(T) \leq {\rm rad}(T)$.\end{proof}  

\begin{definition} Label the vertices of trees $T$ and $T'$ as $V(T) = \{u_0,u_1,\dots,u_{|V(T)|-1}\}$ and $V(T') = \{v_0,v_1,\dots,v_{|V(T')|-1}\}$.  If the cop occupies $(u_p,v_q)$ and the robber occupies $(u_i,v_j)$ then

\begin{enumerate}
\item[(1)] the {\it cop-robber distance} in the first and second coordinates is ${\rm dist}_T(u_p,u_i)$ and ${\rm dist}_{T'}(v_q,v_j)$, respectively;

\item[(2)] the {\it cop-robber distance is smaller in the first coordinate} if ${\rm dist}_T(u_p,u_i) < {\rm dist}_{T'}(v_q,v_j)$; 

\item[(3)] the {\it cop-robber distance is smaller in the second coordinate} if ${\rm dist}_T(u_p,u_i) > {\rm dist}_{T'}(v_q,v_j)$; 

\item[(4)] the {\it equidistant set} of $(u_p,v_q) \in V(T \square T')$ is the set of vertices $$\big\{ (u_k,v_\ell):{\rm dist}_T(u_p,u_k)={\rm dist}_{T'}(v_q,v_\ell)\big\}.$$ \end{enumerate} If the cop occupies $(u_p,v_q)$ and the robber occupies a vertex in the equidistant set of $(u_p,v_q)$, then {\it the cop and robber are equidistant} in $T \square T'$. \end{definition} 

The capture time in $T$ and $T'$ respectively will be a helpful tool to prove a result in $T\square T'$. Hence, we call these relative capture times for the input graphs of the product. Formally, we have the following.
\begin{definition} We denote by ${\rm capt}_T(u_p;u_i)$, the {\it relative capture time} on $T$ when the cop initially occupies $u_p$ and the robber initially occupies $u_i$.  \end{definition}

\begin{theorem}\label{thm:prodTrees} For any finite trees $T$ and $T'$, ${\rm dmg}(T \square T') = {\rm rad}(T \square T')-1.$ \end{theorem}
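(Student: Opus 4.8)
The plan is to prove the two inequalities separately. The lower bound ${\rm dmg}(T \square T') \ge {\rm rad}(T \square T') - 1$ is immediate from Theorem~\ref{thm:newlowerbound}, so all of the real work is the matching upper bound. First I would record the standard fact that distance is additive across a Cartesian product, so that ${\rm ecc}_{T\square T'}(u_p,v_q) = {\rm ecc}_T(u_p) + {\rm ecc}_{T'}(v_q)$ and hence ${\rm rad}(T \square T') = {\rm rad}(T) + {\rm rad}(T')$, with a center of the product being the pair $(c,c')$ of centers of $T$ and $T'$. The goal of the upper-bound argument is then to exhibit a cop strategy, starting at $(c,c')$, that damages at most ${\rm rad}(T) + {\rm rad}(T') - 1$ vertices.

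For the strategy I would have the cop start at $(c,c')$ and then \emph{answer the robber in the coordinate the robber last moved}: if the robber changed its first coordinate in the previous round, the cop steps one vertex toward the robber along the unique $T$-geodesic; symmetrically for the second coordinate; and the cop passes whenever the robber passed. The only special round is the first, where there is no previous robber move to answer; here the cop should step toward the robber in a coordinate of largest cop--robber distance rather than pass, precisely so as not to concede a free retreat step (this is exactly the ${\rm dmg}$ versus ${\rm dmg}'$ distinction exploited in Lemma~\ref{lem:Phase2}). Because every robber move changes exactly one coordinate, this uses the cop's single move per round to shadow the robber simultaneously in both factors with a one-round lag, and the relative-capture-time and equidistant-set definitions are the natural bookkeeping for analysing the two factor-pursuits independently.

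The key structural claim I would establish is an invariant: after each cop move, the cop's $i$-th coordinate lies on the $T$- (resp.\ $T'$-)geodesic from $c_i$ to the robber's $i$-th coordinate. Since the cop begins at $c_i$ and only ever trails the robber's trail in coordinate $i$, this says the cop always sits between the center and the robber in each factor. Granting the invariant, any robber move toward $c_i$ moves into already-damaged, cop-guarded territory, so every move to a fresh vertex is a \emph{monotone retreat} (it strictly increases the robber's distance from $c_i$ in the coordinate moved and never revisits a value). Consequently the damaged set is a monotone staircase, and its size is $1 + (\text{coord-1 retreat length}) + (\text{coord-2 retreat length})$. Writing $a_0 = {\rm dist}_T(c,u_i)$ and $b_0 = {\rm dist}_{T'}(c',v_j)$ for the robber's initial coordinate distances, each retreat length is bounded by the room available, namely ${\rm rad}(T) - a_0$ and ${\rm rad}(T') - b_0$ respectively (every vertex is within ${\rm rad}$ of a center, as in the proof of Corollary~\ref{cor:tree}). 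Hence the damage is at most $1 + ({\rm rad}(T) - a_0) + ({\rm rad}(T') - b_0) = {\rm rad}(T) + {\rm rad}(T') - (a_0 + b_0) + 1$. Any robber that damages even one vertex must start at total distance $a_0 + b_0 \ge 2$ (at distance $1$ it is captured on the cop's first move before it can move or pass, damaging nothing), and for $a_0 + b_0 \le 1$ the damage is $0 \le {\rm rad}(T) + {\rm rad}(T') - 1$; so in all cases the bound ${\rm rad}(T) + {\rm rad}(T') - 1$ holds, and the $-1$ is exactly the slack $a_0 + b_0 \ge 2$.

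The hard part will be making the invariant, and hence the monotonicity, fully rigorous under the one-round lag and the one-move-per-round budget. The delicate point is that the robber may rapidly alternate coordinates to try to desynchronize the cop, so I must verify that answering in the robber's last-moved coordinate keeps the cop on the correct geodesic in \emph{every} factor simultaneously, never letting the robber open a new retreat direction faster than the cop can follow; the first-move choice feeds directly into this and is what protects the final $-1$. I would also need to confirm that when the robber has exhausted its retreat in both factors it is forced onto a doubly-extremal vertex $(\ell_1,\ell_2)$ while the cop sits at the adjacent $(s_1,s_2)$, a configuration in which $(\ell_1,\ell_2)$ is o-dominated by $(s_1,s_2)$ and both robber-neighbours are cop-adjacent, so no further fresh vertex is reachable. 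Verifying that this cornered, o-dominated configuration arises uniformly over all robber starts and all robber play, and that the monotone-staircase count is never undercut by coordinate-switching, is the crux of the argument.
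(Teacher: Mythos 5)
Your lower bound (via Theorem~\ref{thm:newlowerbound}), the identity ${\rm rad}(T \square T') = {\rm rad}(T)+{\rm rad}(T')$, and your description of the terminal configuration (robber on a leaf--leaf vertex o-dominated by the cop on the adjacent stem--stem vertex) are all consistent with the paper. However, the cop strategy your entire upper bound rests on --- \emph{answer in the coordinate the robber last moved} --- is not merely unverified at the ``crux'' you flag; it is false, and so is the monotone-retreat claim you derive from it. Take $T=T'=P_5$ with vertices $1,\dots,5$ and center $3$, so your claimed bound is ${\rm rad}(P_5\square P_5)-1=3$. The cop starts at $(3,3)$ and the robber at $(1,1)$; both coordinate distances are $2$, so suppose your cop's first move is to $(2,3)$ (the other choice is symmetric). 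The robber now moves only in the second coordinate: $(1,1)\to(1,2)$, and your cop answers $(2,3)\to(2,2)$; then $(1,2)\to(1,3)$, cop $(2,2)\to(2,3)$; then $(1,3)\to(1,4)$, cop $(2,3)\to(2,4)$; then $(1,4)\to(1,5)$, cop $(2,4)\to(2,5)$; then the robber passes. Every time the cop is about to move, the robber sits diagonally at distance two, so no capture is ever available, and each robber move lands on a fresh vertex: the robber damages all five vertices $(1,1),\dots,(1,5)$, exceeding your bound of $3$ (on $P_n\square P_n$ the same play damages $n$ vertices against a bound of $n-2$ for odd $n$). Note in particular that the move $(1,2)\to(1,3)$ goes \emph{toward} the center $c_2=3$ yet reaches an undamaged, unguarded vertex, directly refuting your claim that every move to a fresh vertex is a retreat; being ``close'' in one coordinate is worthless in the product while the robber preserves a gap of one in the other coordinate, and your rule never gives the cop a tempo to close that gap.

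The paper's strategy differs exactly where yours breaks. Its cop compares the two coordinate distances and moves, via an optimal strategy on that factor, in the coordinate where the cop--robber distance is \emph{larger}, passing when the distances are equal. The two rules agree when the robber retreats, but when the robber approaches in one coordinate, the paper's cop spends its move in the \emph{other} coordinate, restoring the invariant that the cop and robber are equidistant; in the play above, such a cop answers $(1,2)\to(1,3)$ by stepping from $(2,3)$ onto $(1,3)$ and captures after only two vertices are damaged. The accounting is then done not with a monotone staircase but with relative capture times: Phase~1 (reaching equidistance) plus Phase~2 (each round of which strictly decreases ${\rm capt}_T$ or ${\rm capt}_{T'}$) last at most $\big({\rm dist}_T(u_0,u_a)+{\rm capt}_T(u_a;u_k)-1\big)+\big({\rm capt}_{T'}(v_0;v_\ell)-1\big)\le {\rm rad}(T)+{\rm rad}(T')-2$ rounds, after which the robber is pinned on an o-dominated leaf--leaf vertex, giving damage at most ${\rm rad}(T\square T')-1$. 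To salvage your argument you would need to replace ``last-moved coordinate'' by ``larger-distance coordinate'' and rebuild the count along these lines; as written, the proposal does not prove the theorem.
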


\begin{proof}

The lower bound follows from Theorem~\ref{thm:newlowerbound}.  Suppose the cop initially occupies $(u_0,v_0)$ where $u_0$ and $v_0$ are center vertices for $T$ and $T'$, respectively.  We present a strategy for the cop in which at most ${\rm rad}(T \square T')-1$ vertices can be damaged.\medskip

\noindent{\it Cop-strategy:} Suppose the cop occupies $(u_a,v_b)$, the robber occupies $(u_i,v_j)$, and it is the cop's turn to move.  
\begin{enumerate}[(1)]

\item If the cop and robber are equidistant, then the cop passes.  

\item Otherwise, 

(i) if ${\rm dist}_T(u_a,u_i) > {\rm dist}_{T'}(v_b,v_j)$, the cop changes their first coordinate in accordance with an optimal cop-strategy on $T$; 

(ii) if ${\rm dist}_T(u_a,u_i) < {\rm dist}_{T'}(v_b,v_j)$, the cop changes their second coordinate in accordance with an optimal cop-strategy on $T'$. 

\end{enumerate}
 
If the cop follows the above cop-strategy, at some round the cop and robber will be equidistant since $T$ and $T'$ are finite.  When the cop and robber are equidistant for the first time, {\it Phase 1} is complete.  We next bound the number of rounds before Phase 1 finishes, which bounds the number of vertices that have been damaged when Phase 1 finishes.

Recall the cop is initially located at $(u_0,v_0)$ and suppose the robber is initially located at $(u_i,v_j)$.  If the cop and robber are initially equidistant, then Phase 1 is complete before the cop moves (and no vertices are damaged in Phase 1).  Otherwise, without loss of generality, suppose ${\rm dist}_T(u_0,u_i) > {\rm dist}_{T'}(v_0,v_j)$; so the cop-robber distance is smaller in the second coordinate.  Note that after each move of the cop (following (2)(i)) and after each move of the robber, either the cop and robber are equidistant or the cop-robber distance remains smaller in the second coordinate.  Consequently, such a cop only ever changes first coordinate in Phase 1.   Suppose the cop and robber become equidistant during round $t$ when the cop occupies $(u_a,v_0)$ and the robber occupies $(u_k,v_\ell)$.  Then $t = {\rm dist}_T(u_0,u_a)$ and ${\rm dist}_T(u_a,u_k) = {\rm dist}_{T'}(v_0,v_\ell)$.

{\it Phase 2} begins immediately after the cop and robber become equidistant. Given (1) the cop-strategy, we can assume it is the robber's turn to move.   If the robber passes, then the cop will continue to pass until the robber moves; thus, we need only consider active moves by the robber:  

\begin{enumerate}[(a)]

\item Suppose the robber's move increases the cop-robber distance in the first coordinate or decreases the cop-robber distance in the second coordinate.  Then the cop-robber distance will be smaller in the second coordinate.  Following (2)(i), the cop's move will decrease the cop-robber distance in the first coordinate and leave the cop and robber equidistant.

\item Suppose the robber's move increases the cop-robber distance in the second coordinate or decreases the cop-robber distance in the first coordinate.  Then the cop-robber distance will be smaller in the first coordinate.  Following (2)(ii), the cop's move will  decrease the cop-robber distance in the second coordinate and leave the cop and robber equidistant.
\end{enumerate}

In (a) and (b), each move by the cop leaves the cop and robber equidistant; however, it is important to observe that 
the relative capture time of at least one of $T,T'$ decreases during each round in Phase 2.  Recall that when Phase 2 begins, the robber occupies $(u_k,v_\ell)$.  After at most $$\big({\rm capt}_{T'}(v_0;v_\ell)-1\big)+\big({\rm capt}_T(u_a;u_k)-1\Big)$$
rounds of Phase 2, the cop and robber are equidistant, the vertices corresponding to the first and second coordinate of the cop and robber are stems and leaves in $T$ and $T'$; i.e. if a vertex occupied by the cop dominates a vertex occupied by the robber in $T$ and in $T'$, then in the Cartesian product, $T\square T'$, the cop will $o$-dominate the vertex occupied by the robber.  The robber can damage no new vertices, apart from $(u_k,v_\ell)$.  By the proof of Corollary~\ref{cor:tree}, ${\rm capt}_{T'}(v_0;v_\ell) \leq {\rm rad}(T')$.   Note that ${\rm dist}_T(u_0,u_a)+{\rm capt}_T(u_a;u_k) \leq {\rm rad}(T)$.  Thus, after at most \begin{eqnarray*} \big({\rm capt}_T(u_a;u_k)-1+{\rm dist}_{T}(u_0,u_a)\big)+\big({\rm capt}_T(v_0;v_\ell)-1\big) &\leq& {\rm rad}(T)-1+{\rm rad}(T')-1 \\ &\leq& {\rm rad}(T)+{\rm rad}(T')-2 \\ &=& {\rm rad}(T \square T')-2 \end{eqnarray*} rounds, the cop occupies a vertex $(u_m,v_n)$ such that $u_m$ and $v_n$ are stems in $T$ and $T'$ respectively and the robber occupies $(u_x,v_y)$ such that $u_x$ and $v_y$ are leaves in $T$ and $T'$ respectively where $u_x \in N_T(u_m)$ and $v_y \in N_{T'}(v_n)$.  Since the robber can remain on $(u_x,v_y)$ indefinitely, the robber also damages $(u_x,v_y)$.  Thus, at most ${\rm rad}(T \square T')-2+1 = {\rm rad}(T \square T')-1$ vertices are damaged.\end{proof}  

For both trees and the product of trees, the damage number is one less than the radius of the graph.  Consequently, we ask which other graphs have this property.

\begin{question} For which graphs $G$ is ${\rm dmg}(G) = {\rm rad}(G)-1$?  For which graphs $G$ and $H$ is ${\rm dmg}(G \square H) = {\rm rad}(G \square H)-1$? \end{question}

\subsection{Products of cycles}\label{subsec:cycles}

In this section, for every cycle $C_m$, we will assume the vertices are labeled as $u_0,\dots,u_{m-1}$ where $u_i \sim u_{i+1}$ (mod $m$).  Similarly, for every cycle $C_n$, the vertices are labeled $v_0,\dots,v_{n-1}$ where $v_i \sim v_{i+1}$ (mod $n$).
 
\begin{theorem}\label{thm: cycle_shadow}
 If $m,n \geq 4$ then \[{\rm dmg}(C_m \square C_n) \geq \max \Big\{{\rm dmg}(C_m) |V(C_n)|, {\rm dmg}(C_n)|V(C_m)|\Big\}.\]
\end{theorem}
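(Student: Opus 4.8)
The plan is to prove the lower bound by producing an explicit robber strategy. By the symmetry of the Cartesian product it suffices to give a robber strategy on $C_m \square C_n$ that damages at least ${\rm dmg}(C_m)\cdot|V(C_n)| = \lfloor\frac{m-1}{2}\rfloor\,n$ vertices; running the identical argument with the two factors exchanged gives the bound with ${\rm dmg}(C_n)\cdot|V(C_m)|$, and the maximum of the two then follows. Throughout I would view $C_m\square C_n$ as $n$ vertex-disjoint \emph{columns} $X_0,\dots,X_{n-1}$, where $X_j$ is the copy of $C_m$ induced on $\{(u_i,v_j):0\le i\le m-1\}$, with $X_j$ joined to $X_{j+1 \bmod n}$ by the natural perfect matching. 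Since the columns partition $V(C_m\square C_n)$, it is enough to have the robber damage at least ${\rm dmg}(C_m)$ distinct vertices in \emph{each} of the $n$ columns.

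The strategy I would give processes the columns in cyclic order $X_0,X_1,\dots,X_{n-1}$. While the robber's second coordinate equals $j$, it plays inside $X_j\cong C_m$ a capture-avoiding robber strategy for the damage game on $C_m$, treating the cop's first coordinate as the cop of that $C_m$-game. Such a strategy exists and guarantees at least ${\rm dmg}(C_m)$ distinct damaged vertices while never being captured, since $C_m$ (with $m\ge 4$) has no c-dominated vertex, so Observation~\ref{obs:no-c} applies to the factor $C_m$. The key structural point is that the cop occupies only one column at a time: the cop can threaten the robber only in rounds where the cop's second coordinate also equals $j$, and in those rounds the threat is exactly a $C_m$-capture threat, parried by a move within $X_j$; in all other rounds the absent cop merely hands the robber free moves, which can only increase damage. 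Hence the robber can remain safely in $X_j$ and is guaranteed to reach ${\rm dmg}(C_m)$ distinct damaged vertices of $X_j$ in finitely many rounds.

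Once the robber has damaged ${\rm dmg}(C_m)$ vertices of $X_j$, it advances. Because the cop occupies a single vertex, at most one vertex of the form $(u_i,v_{j+1})$ is blocked, so the robber can first reposition (safely, within $X_j$) to a row $i$ with $(u_i,v_{j+1})$ unoccupied and then slide to $X_{j+1}$ without stepping onto the cop. Arriving in $X_{j+1}$ after the robber's own move, it is the cop's turn, so the robber faces a fresh $C_m$-game in a column in which nothing has yet been damaged; whatever the cop's position there, the robber is again guaranteed ${\rm dmg}(C_m)$ distinct vertices (this is consistent with entering a column effectively granting the robber the first move, matching ${\rm dmg}'(C_m)\ge{\rm dmg}(C_m)$). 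Repeating this $n-1$ times, the robber visits all $n$ columns exactly once, damaging ${\rm dmg}(C_m)$ distinct vertices in each, for a total of $\lfloor\frac{m-1}{2}\rfloor\,n$ distinct damaged vertices.

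The step I expect to be the main obstacle is making the column transition fully rigorous: one must verify that the robber can always both (i) reposition inside the current column and slide forward to the next column without ever moving onto the cop or being captured on the cop's reply, and (ii) thereby make monotone progress around $C_n$ so that all $n$ columns are genuinely reached, each counted once. Care is also needed to confirm that the embedded $C_m$-strategy never forces the robber to leave its column prematurely—that is, that capture-avoidance inside $X_j$ only ever requires first-coordinate moves—and to handle the parity in $\lfloor\frac{m-1}{2}\rfloor$ when invoking the exact value ${\rm dmg}(C_m)=\lfloor\frac{m-1}{2}\rfloor$ together with the no-corner property of $C_m$ for $m\ge 4$.
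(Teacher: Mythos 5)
Your proposal is correct and takes essentially the same route as the paper: the paper's proof is precisely your strategy, which it calls the \emph{shadow strategy} --- the robber plays the $C_m$-damage game against the projection of the cop's position into the robber's current copy of $C_m$, sweeps through the $n$ copies one at a time, and damages ${\rm dmg}(C_m)$ vertices in each. The transition step you flag as the main obstacle is in fact closed by the invariant you already maintain: playing capture-avoidingly against the cop's first coordinate keeps the robber at distance at least two from the shadow after each robber move, and since ${\rm dist}\big((u_a,v_b),(u_c,v_d)\big) = {\rm dist}_{C_m}(u_a,u_c) + {\rm dist}_{C_n}(v_b,v_d)$, this lower-bounds the true distance to the cop by two, so sliding into the next copy can never be answered by a capture.
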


\begin{proof}
We must show that the robber can damage $\max \Big\{{\rm dmg}(C_m) |V(C_n)|, {\rm dmg}(C_n)|V(C_m)|\Big\}$ vertices regardless of the cop strategy. 
In order to discuss the location of the robber, relative to the cop, consider the copy of $C_m$ in which the cop initially places themselves as copy $0$. The index for the second coordinate is $v_0$ (on $C_n$). We give the following robber strategy. The robber starts in the same copy of $C_m$ as the cop, at distance two away. Without loss of generality, suppose the cop started at $(u_i, v_0)$, then the robber starts at $(u_{i+2}, v_0)$. The robber responds to each possible \emph{initial} cop move using the following strategy:
\begin{enumerate}[(1)]
\item If the cop passes, then the robber increases the index of their first coordinate mod~$m$ (thus moves in their copy of $C_m$), maintaining a distance of at least two from the cop.

\item If the cop increases their distance to the robber in the first coordinate, then the robber decreases their distance to the cop in the first coordinate.

\item If the cop decreases their distance to the robber in the first coordinate, then the robber increases their distance to the cop in the first coordinate.

\item If the cop increases their distance in the second coordinate, then the robber plays as though the cop is still occupying the same vertex in the robber's copy of $C_m$, and treats this as a pass move by the cop. The robber will damage a new vertex as they move away from the cop in the first coordinate. This will be called the \emph{shadow strategy}. 
\end{enumerate}

More precisely, the shadow strategy for the robber is to pretend that the cop is occupying a vertex of the robber's copy of $C_m$ by projecting the image of the cop from the second coordinate, to its corresponding position in the robber's copy of $C_m$: the cop's shadow. For example, if the robber is occupying the vertex labelled $(u_{i+2}, v_j)$, and the cop is occupying $(u_{i}, v_{j+1})$, the robber will play as though the cop is occupying $(u_i, v_j)$.  The robber plays in this way to maintain a distance of at least two from the cop in the robber's copy of $C_m$. When the robber moves to a new copy of $C_m$, this strategy prevents the robber from being immediately captured. If the cop moves in the second coordinate, this works to the advantage of the robber as long as the robber plays the shadow strategy.

 The robber damages at least ${\rm dmg}(C_m)$ vertices in copy 0 as they can guarantee this amount when the cop plays optimally. 
 If the cop is elsewhere in the graph, but the robber maintains distance at least two from the cop's shadow, then the cop will never be able to capture the robber, nor will they be able to prevent the robber from changing copies of $C_m$. If the cop returns to a previously visited vertex other than their starting position or passes, then the robber moves to the closest undamaged vertex that is at least distance two away from the cop in the same copy of $C_m$. 
If such a vertex does not exist, it means the robber has damaged ${\rm dmg}(C_m)$ vertices in copy 0. The robber instead increases their second coordinate in order to begin the process again, but in a different copy of $C_m$. The robber applies this strategy until all copies of $C_m$ have been visited, and ${\rm dmg}(C_m)$ vertices have been damaged in each copy.\end{proof}

In~\cite{CS2019}, the authors provide a strategy for the cop to restrict the damage on $C_m$ to at most $\lfloor (m-1)/2\rfloor$ vertices.  Informally, the cop always moves in the opposite direction to the robber's last move.  We next show that if the cop initially passes, the damage number only increases if $m$ is even.  Lemma~\ref{lem:prime} will be then used to prove an upper bound for ${\rm dmg}(C_m \square C_n)$.

\begin{lemma}\label{lem:prime} For any $m \geq 4$, ${\rm dmg}'(C_m) = \begin{cases} {\rm dmg}(C_m) & \text{~if $m$ is odd;} \\ {\rm dmg}(C_m)+1 & \text{~if $m$ is even.} \end{cases}$\medskip 

\noindent Furthermore, in the case where $m$ is even, if the robber begins on a vertex of $C_m$ that is not adjacent to the cop and the cop passes during the first round, the cop can prevent the robber from damaging more than ${\rm dmg}(C_m)$ vertices.\end{lemma}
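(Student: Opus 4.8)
The plan is to combine the general sandwich ${\rm dmg}(C_m)\le {\rm dmg}'(C_m)\le {\rm dmg}(C_m)+1$, established earlier in this subsection, with the known value ${\rm dmg}(C_m)=\lfloor (m-1)/2\rfloor$ from~\cite{CS2019}, so that in each parity class only one inequality remains to be proved. For odd $m$ it suffices to exhibit a cop strategy showing ${\rm dmg}'(C_m)\le {\rm dmg}(C_m)$; for even $m$ it suffices to exhibit a robber strategy showing ${\rm dmg}'(C_m)\ge {\rm dmg}(C_m)+1$. Throughout I place the cop at $u_0$. Since the robber chooses its starting vertex after the cop is committed to passing, for the even lower bound the robber is free to begin adjacent to the cop, at $u_1$, and exploiting exactly this freedom is what produces the extra damaged vertex.

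For the odd upper bound I would have the cop use the oscillation (opposite-move) strategy: after the forced pass in round $1$, in every later round the cop moves one step in the direction opposite to the robber's previous move, and passes whenever the robber passes. Writing $c_t,r_t$ for the cop's and robber's indices at the end of round $t$, a short computation shows this strategy preserves the quantity $c_t+r_{t-1}\pmod m$; that is, the cop tracks the reflection of the robber's previous position across a fixed axis through the vertex $\mu=(c_t+r_{t-1})\cdot 2^{-1}\pmod m$. For odd $m$ the inverse $2^{-1}$ exists, so this axis always passes through an actual vertex, and the robber is trapped on one side in an arc of exactly $\lfloor (m-1)/2\rfloor=(m-1)/2$ vertices, independent of the value $\sigma=c_t+r_{t-1}$ that the forced pass happens to fix. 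Because the oscillation mirrors reversals as well as forward moves, the robber gains nothing by changing direction, and we obtain ${\rm dmg}'(C_m)={\rm dmg}(C_m)$ for odd $m$.

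For the even lower bound I would give the robber the explicit strategy of starting at $u_1$ and, whenever it is safe, advancing monotonically (increasing the index) to the next undamaged vertex, passing otherwise. Because the cop is forced to pass in round $1$, the robber reaches $u_2$ for free, so after round $1$ the robber has damaged $\{u_1,u_2\}$ while the cop is still at $u_0$. The core of the argument is a race: the only way for the cop to halt the monotone advance is to reach the vertex just ahead of the robber's frontier by travelling the other way around the cycle, and the counterclockwise gap between the cop and the frontier closes by at most two per round, one step from each player. Tracking this gap from its initial value $m-2$ shows that the first position at which the robber is blocked has it already occupying $u_{m/2}$, so the robber has damaged the $m/2={\rm dmg}(C_m)+1$ vertices $u_1,\dots,u_{m/2}$; the parity of $m$ is precisely what makes the blocking configuration land one step later than in the odd case. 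Any cop play other than this fastest interception (for instance chasing from behind, or passing) closes the gap more slowly and lets the robber damage at least as many vertices, so the bound holds against every cop strategy.

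Finally, for the furthermore, suppose $m$ is even and the robber begins at distance at least two. Then after the forced pass the cop enjoys a spare tempo: while the robber is still two or more steps away, the cop can insert a single corrective step that places the oscillation axis on a vertex, equivalently fixing the parity of $c_t+r_{t-1}$, after which the oscillation confines the robber to an arc of $m/2-1={\rm dmg}(C_m)$ vertices exactly as in the normal game. The main obstacle I anticipate is making both the confinement and the race fully rigorous against arbitrary adversarial play: one must verify that the lagged invariant $c_t+r_{t-1}$ is genuinely maintained through every admissible cop move, that the resulting arc length is exactly $\lfloor (m-1)/2\rfloor$ or $m/2$ according to the parity of $m$ (including the borderline rounds in which the cop and robber become adjacent), and, for the furthermore, that the robber provably cannot exploit the cop's corrective tempo when it starts at distance at least two. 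This parity bookkeeping, together with the case analysis over the cop passing, chasing, or intercepting, is where the real work lies.
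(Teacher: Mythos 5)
Your proposal has the same overall architecture as the paper's proof: the sandwich ${\rm dmg}(C_m)\le{\rm dmg}'(C_m)\le{\rm dmg}(C_m)+1$, the adjacent-start monotone race for the even lower bound, and the opposite-move (oscillation) cop strategy for the upper bounds. Two things differ. First, where the paper analyzes the oscillation by counting rounds ($x$ index-increases against $x+\lfloor\frac{i-1}{2}\rfloor$ decreases) to pin the robber inside $S_i=\{u_{\lceil (i+1)/2\rceil},\dots,u_{\lfloor (m+i-1)/2\rfloor}\}$, you use the conserved quantity $c_t+r_{t-1}\pmod m$ and the fixed points of the reflection $x\mapsto\sigma-x$; this is the same strategy with cleaner bookkeeping. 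Second, and more importantly, your handling of the ``furthermore'' clause is genuinely different and in fact repairs a flaw in the paper's own argument. The paper asserts $|S_i|=\lfloor\frac{m-1}{2}\rfloor$ for every $i>1$, but when $m$ is even and $i$ is odd one computes $|S_i|=\frac{m}{2}={\rm dmg}(C_m)+1$: for example, on $C_8$ with the cop at $u_0$ and the robber at $u_3$, the pure cycle-strategy lets the robber damage all of $\{u_2,u_3,u_4,u_5\}$, four vertices rather than ${\rm dmg}(C_8)=3$, precisely because $2x\equiv 3\pmod 8$ has no solution, so your reflection axis falls on an edge rather than a vertex. Your parity-correcting move --- one non-mirrored cop step, inserted safely because the robber starts at distance at least two, which makes $c_t+r_{t-1}$ even so that the axis lands on a vertex and the confining arc has $\frac{m}{2}-1$ vertices --- is exactly the fix this case requires (one must also check, as you implicitly do, that the robber's already-damaged starting vertex lies inside that arc). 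So: same route for the main statement, a tidier invariant-based analysis of the same cop strategy, and for the ``furthermore'' a strategy that works where the paper's, as written, does not. What remains is only the bookkeeping you already flagged: verifying the invariant through passes (with the capture clause when the cop is adjacent) and checking that the corrective step can never be punished.
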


\begin{proof} Recall from Section~\ref{Sec2p1} that ${\rm dmg}'(C_m) \in \{{\rm dmg}(C_m),{\rm dmg}(C_m)+1\}$, so it remains to show that ${\rm dmg}'(C_m) \geq {\rm dmg}(C_m)+1$ when $m$ is even and ${\rm dmg}'(C_m) \leq {\rm dmg}(C_m)$ when $m$ is odd.

Suppose $m$ is even.  We provide a robber strategy which will always damage ${\rm dmg}(C_m)+1$ many vertices, regardless of the cop strategy.  Suppose the cop and robber initially occupy vertices $u_0$ and $u_1$, respectively. The robber's strategy is to increase their index at each round, until they occupy $u_{m/2}$.

Since the cop initially passes, there is no cop strategy to prevent the robber from moving to $u_{m/2}$ before the cop can move to $u_{m/2+1}$ or $u_{m/2-1}$. Thus, the robber will be able to damage $u_1,u_2,\dots, u_{m/2}$ and damage at least ${\rm dmg}(C_m)+1$ vertices.

We next allow $m$ to be either odd or even and provide a cop strategy to find an upper bound for ${\rm dmg}'(C_m)$ while simultaneously proving the second statement in the theorem.  Suppose the cop and robber initially occupy $u_0$ and $u_i$ (respectively) and without loss of generality, $1 \leq i \leq \frac{m}{2}$.  The cop initially passes and for $t >1$, the cop moves according to the following cycle-strategy, where indices are taken modulo $m$: 

\begin{enumerate}[(1)]\item If the robber increases (decreases) their index during round $t-1$, then the cop decreases (increases) their index during round $t$;  

\item If the robber passes during round $t-1$, then the cop passes during round $t$, unless the cop is adjacent to the robber.  In this case, the cop moves to capture the robber.\end{enumerate}

Given (2), it suffices to consider only ``active rounds" of the robber; that is, rounds during which the robber does not pass.  For the remainder of the proof, all rounds will be considered to be active rounds.

First, we assume $i>1$ and show that by following the cycle-strategy, the cop can prevent the robber from damaging a vertex outside the set $$S_i = \big\{ u_k, u_{k+1},u_{k+2},\dots,u_{\lfloor \frac{m+i-1}{2}\rfloor} \big\}$$ where $k = \lceil \frac{i+1}{2}\rceil$.  Suppose that during some round $t>1$, the robber moves from $u_{k+1}$ to $u_k$.  We count the number of  rounds that have passed since the robber first occupied $u_i$ to when the robber moves to $u_k$: for some non-negative integer $x$, during a total of $x$-many rounds (not necessarily consecutive), the robber increased their index (mod $m$) and during $(x+\lfloor \frac{i-1}{2}\rfloor)$-many rounds, the robber decreased their index (mod $m$). Thus, after $2x+\lfloor \frac{i-1}{2}\rfloor$ rounds, the robber occupies $u_k$; i.e. the vertex with subscript $$i+x-\Big(x+\Big\lfloor \frac{i-1}{2}\Big\rfloor\Big) = \Big\lceil \frac{i+1}{2}\Big\rceil = k.$$  Since the cop follows the cycle-strategy, after $2x+\lfloor \frac{i-1}{2}\rfloor+1$ rounds (where the robber has not yet moved in round $2x+\lfloor \frac{i-1}{2}\rfloor$), the cop occupies the vertex with subscript $0-x+(x+\lfloor \frac{i-1}{2}\rfloor) = \lfloor \frac{i-1}{2}\rfloor$.  Observe that $$\Big\lfloor \frac{i-1}{2}\Big\rfloor = \begin{cases} k-1 & \text{~if $i$ is odd} \\ k-2 & \text{~if $i$ is even}.\end{cases}$$ 

At this point, the robber occupies $u_k$, the cop occupies $u_{k-1}$ or $u_{k-2}$ and it the robber's turn to move during round $2x+\lfloor \frac{i-1}{2}\rfloor$.  If $i$ is odd, the cop and robber are adjacent and if $i$ is even, the cop and robber are distance two apart.  In either case, the robber cannot move to $u_{k-1}$.  By a similar argument, the robber cannot move to a higher indexed vertex than $u_{\lfloor \frac{m+i-1}{2}\rfloor}$ without being captured.  Thus, if $i>1$, at most $|S_i| = \lfloor \frac{m-1}{2}\rfloor= {\rm dmg}(C_m)$ vertices are damaged.

Second, we assume $i=1$ and show that by following the cycle-strategy, the cop can prevent the robber from damaging a vertex outside the set $S_1 = \{u_1,u_2,\dots,u_{\lfloor m/2\rfloor}\}.$ Suppose that during some round $t>1$, the robber moves from $u_{\lfloor m/2\rfloor -1}$ to $u_{\lfloor m/2\rfloor}$.  Then during a total of $x$-many rounds (not necessarily consecutive), the robber decreased their index and during $(x+\lfloor \frac{m}{2}\rfloor -1)$-many rounds, the robber increased their index, leaving the robber at $u_{\lfloor m/2\rfloor}$.  After the cop has moved during round $2x+\lfloor \frac{m}{2}\rfloor$,  the robber occupies $u_{\lfloor m/2 \rfloor}$,  the cop occupies $u_{m-(\lfloor\frac{m}{2}\rfloor-1)}$ and it is the robber's turn to move.  But as the cop and robber are either adjacent (if $m$ is even) or distance $2$ apart (if $m$ is odd), the robber cannot move to $u_{\lfloor m/2\rfloor+1}$.  By a similar argument, the robber cannot move to a lower indexed vertex than $u_1$.  Thus, if $i=1$, the number of vertices damaged is at most $$|S_1| = \Big\lfloor \frac{m}{2} \Big\rfloor \leq \begin{cases} {\rm dmg}(C_m) & \text{if $m$ is odd} \\ {\rm dmg}(C_m)+1 & \text{if $m$ is even.}\end{cases}$$\end{proof}

The next result follows from Theorem~\ref{thm:upperH}, Theorem~\ref{thm: cycle_shadow}, and Lemma~\ref{lem:prime}.

\begin{corollary}\label{cor:mostcycles} For $m,n \geq 4$ where at least one of $m, n$ is odd,  $${\rm dmg}(C_m \square C_n) = \max\Big\{{\rm dmg}(C_m)|V(C_n)|, {\rm dmg}(C_n)|V(C_m)|\Big\}.$$\end{corollary}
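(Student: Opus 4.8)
The plan is to obtain the equality by squeezing ${\rm dmg}(C_m \square C_n)$ between the lower bound of Theorem~\ref{thm: cycle_shadow} and the upper bound of Theorem~\ref{thm:upperH}, using Lemma~\ref{lem:prime} to force the two bounds to coincide. Since $m,n \geq 4$, Theorem~\ref{thm: cycle_shadow} immediately supplies
$${\rm dmg}(C_m \square C_n) \geq \max\Big\{ {\rm dmg}(C_m)|V(C_n)|, {\rm dmg}(C_n)|V(C_m)| \Big\},$$
so only the matching upper bound remains to be established.

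For the upper bound, I would first invoke the commutativity $C_m \square C_n \cong C_n \square C_m$ in order to arrange, without loss of generality, that an odd cycle plays the role of $H$ in Theorem~\ref{thm:upperH}. Since at least one of $m,n$ is odd, relabel if necessary so that $n$ is odd, and then apply Theorem~\ref{thm:upperH} with $G = C_m$ and $H = C_n$ to get
$${\rm dmg}(C_m \square C_n) \leq \max\Big\{ {\rm dmg}(C_m)|V(C_n)|, {\rm dmg}'(C_n)|V(C_m)| \Big\}.$$
The decisive point—and the only place the parity hypothesis is used—is that the factor carrying the ${\rm dmg}'$ penalty is the odd one. By Lemma~\ref{lem:prime}, ${\rm dmg}'(C_n) = {\rm dmg}(C_n)$ whenever $n$ is odd, so the potential $+1$ penalty vanishes and the upper bound reduces to exactly $\max\{ {\rm dmg}(C_m)|V(C_n)|, {\rm dmg}(C_n)|V(C_m)| \}$, which matches the lower bound. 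Combining the two inequalities yields the claimed equality.

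The main obstacle is really a bookkeeping subtlety rather than a genuine difficulty: one must ensure that the odd cycle is assigned to the $H$-role in Theorem~\ref{thm:upperH}, since that theorem is asymmetric, with one factor contributing ${\rm dmg}$ and the other ${\rm dmg}'$. Had an even cycle instead occupied the $H$-role, Lemma~\ref{lem:prime} would force ${\rm dmg}'(C_n) = {\rm dmg}(C_n)+1$, and the resulting upper bound would strictly exceed the lower bound, leaving a gap; this is exactly why the corollary excludes the case in which both $m$ and $n$ are even. The commutativity of the Cartesian product is precisely what grants the freedom to choose which factor pays the penalty, and the parity hypothesis guarantees that an odd factor is available to absorb it.
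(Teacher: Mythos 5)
Your proof is correct and follows exactly the route the paper intends: the paper gives no detailed argument, stating only that the corollary ``follows from Theorem~\ref{thm:upperH}, Theorem~\ref{thm: cycle_shadow}, and Lemma~\ref{lem:prime},'' which is precisely your squeeze between the shadow-strategy lower bound and the product upper bound with the ${\rm dmg}'$ penalty killed by odd parity. Your explicit handling of the bookkeeping point---using commutativity of the Cartesian product to place the odd cycle in the asymmetric $H$-role of Theorem~\ref{thm:upperH}---is the one detail the paper leaves implicit, and you have it right.
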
 

Corollary~\ref{cor:mostcycles} provides a family of graphs for which the upper bound of Theorem~\ref{thm:upperH} is exact.  Unfortunately, the bound of Theorem~\ref{thm:upperH} is not exact when both $m$ and $n$ are even.  

\begin{lemma}\label{thm: upperbound product of cycles} If $m, n \geq 4$ with $m$ and $n$ both even, then $${\rm dmg}(C_m \square C_n) \leq 1+\max\Big\{{\rm dmg}(C_m)|V(C_n)|, {\rm dmg}(C_n)|V(C_m)|\Big\}.$$\end{lemma}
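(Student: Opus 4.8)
The plan is to build on the general bound of Theorem~\ref{thm:upperH} and recover all but one of the vertices lost to the ``$+1$'' in Lemma~\ref{lem:prime}. Taking $G=C_m$ and $H=C_n$ and using that ${\rm dmg}'(C_n)={\rm dmg}(C_n)+1$ for even $n$, Theorem~\ref{thm:upperH} already gives
\[
{\rm dmg}(C_m \square C_n) \leq \max\Big\{{\rm dmg}(C_m)|V(C_n)|,\ \big({\rm dmg}(C_n)+1\big)|V(C_m)|\Big\}.
\]
This overshoots the target essentially by $|V(C_m)|-1$. Recall from the proof of Theorem~\ref{thm:upperH} that the robber is confined to $S_G \cup S'_H$, where $S'_H$ is the band of all first coordinates over the ${\rm dmg}(C_n)+1$ second-coordinate values in $D'(C_n)$. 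The first coordinate is the one in which the cop moves first, so confinement to $S_G$ still costs only ${\rm dmg}(C_m)|V(C_n)|$, and if the robber stays in $S_G\cap S'_H$ the damage is at most ${\rm dmg}(C_m)\big({\rm dmg}(C_n)+1\big)\le {\rm dmg}(C_m)|V(C_n)|$ since $|V(C_n)|\ge {\rm dmg}(C_n)+1$. Thus the entire task reduces to the case where the robber escapes into $S'_H\setminus S_G$: I would show that there the robber damages at most ${\rm dmg}(C_n)|V(C_m)|+1$ vertices rather than $\big({\rm dmg}(C_n)+1\big)|V(C_m)|$.

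The key tool is the second sentence of Lemma~\ref{lem:prime}: the extra vertex in a single copy of $C_n$ is damaged only when the robber enters that copy at second-coordinate distance exactly $1$ from the cop with the cop about to pass; entering at distance $\ge 2$ yields only ${\rm dmg}(C_n)$ damaged vertices there. So I would keep the cop playing the cycle-strategy of Lemma~\ref{lem:prime} in the second coordinate while matching first coordinates as in Lemma~\ref{lem:Phase2}, and prove that the robber can occupy the ``distance $1$, robber to move, about to play $C_n$'' configuration at most once. Because $n$ is even, $C_n$ is bipartite and the parity of the second-coordinate cop--robber distance is a natural invariant; the plan is to track this parity and argue that after the robber cashes in the single tempo advantage it inherits from the opening round, the cop can force the second-coordinate distance to remain even, hence $\ge 2$, at every subsequent entry into a new copy.

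Granting that, the accounting is clean. Exactly one distinguished copy of $C_n$ (the one entered at distance $1$) contributes the full $D'(C_n)$, namely ${\rm dmg}(C_n)+1$ distinct second-coordinate values, while every other copy contributes only ${\rm dmg}(C_n)$ of those same values; in particular the extra ``antipodal'' row is reached in only one copy. Summing gives
\[
\big({\rm dmg}(C_n)+1\big) + {\rm dmg}(C_n)\big(|V(C_m)|-1\big) = {\rm dmg}(C_n)|V(C_m)| + 1 .
\]
Combined with the $S_G$-bound and the case analysis of Theorem~\ref{thm:upperH}, the total damage is at most $\max\big\{{\rm dmg}(C_m)|V(C_n)|,\ {\rm dmg}(C_n)|V(C_m)|+1\big\}$, and since $\max\{A,B+1\}\le \max\{A,B\}+1$ this is the claimed bound.

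The main obstacle is precisely the parity/tempo control asserted in the second paragraph: ruling out that the robber re-acquires the ``distance $1$, robber to move'' configuration copy after copy. The naive reactive cop that always mirrors the coordinate the robber just changed is not enough, since when the robber switches copies the cop spends a move catching up in the first coordinate and thereby re-cedes tempo in the second coordinate, allowing the robber to collect the antipodal row again in the new copy and damage all of $S'_H$. The delicate step, and the one place where the even-ness of $n$ is essential, is to have the cop spend a single move correcting the second-coordinate parity (moving the distance from $1$ up to $2$) without relinquishing control of the first coordinate, so that this correction occurs exactly once and is charged as the single extra vertex rather than an entire extra column.
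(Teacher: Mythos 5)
Your proposal correctly identifies both the target (recovering all but one of the $|V(C_m)|$ vertices lost to the ``$+1$'' in ${\rm dmg}'(C_n)$) and the genuine difficulty, but the step that does all the work is asserted rather than proven, and it is doubtful it can be proven in the form you state it. You claim the cop can ``spend a single move correcting the second-coordinate parity (moving the distance from $1$ up to $2$) without relinquishing control of the first coordinate.'' A cop move changes exactly one coordinate, so a second-coordinate correction necessarily forfeits the first-coordinate mirror whenever the robber has just moved in the first coordinate; this is precisely the tempo loss you acknowledge, and no mechanism is supplied to resolve it. Worse, the per-copy accounting in your third paragraph (``the extra antipodal row is reached in only one copy'') is exactly what fails dynamically: once the robber stands on the extra row at second-coordinate distance $1$ with the move, a mirroring cop lets the robber tour the entire row (changing first coordinate repeatedly, $m$ vertices damaged), while a cop who instead chases along that row stays forever one step behind on a cycle and blocks nothing. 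So the robber reaching that configuration even once in mid-game threatens the full band $\big({\rm dmg}(C_n)+1\big)|V(C_m)|$, and your parity invariant is not shown to prevent recurrence of the configuration --- or even to make a single occurrence affordable.

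The paper's proof sidesteps this entirely by doing the case analysis at round zero rather than mid-game. The second statement of Lemma~\ref{lem:prime} says the ``$+1$'' materializes only when the robber \emph{begins} adjacent to the cop in the coordinate where the cop passes first; in $C_m \square C_n$, the only viable starting vertices where the robber is adjacent to the cop in \emph{both} coordinates form the four-element set $S = \{(u_1,v_1),(u_1,v_{n-1}),(u_{m-1},v_1),(u_{m-1},v_{n-1})\}$ (a robber at product-distance $1$ is captured immediately). If the robber starts outside $S$, the cop runs the Theorem~\ref{thm:upperH} strategy oriented so that the ``pass-first'' coordinate is one where the robber is not adjacent, and Lemma~\ref{lem:prime} removes the $+1$ altogether, giving $\max\{{\rm dmg}(C_m)|V(C_n)|,{\rm dmg}(C_n)|V(C_m)|\}$. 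If the robber starts in $S$, the cop passes once: the robber damages that single vertex and must move to a position covered by the previous case (or be captured), so the total is $1+\max\{\cdot,\cdot\}$ with the $+1$ charged globally, once, rather than per copy. In short, the missing idea is that the bad configuration need only ever be confronted at the initial position, where a single pass resets the game, rather than being policed throughout play.
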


\begin{proof}
Suppose the cop initially occupies $(u_0,v_0)$. Let $$S = \{ (u_1,v_1), (u_1,v_{n-1}), (u_{m-1},v_1), (u_{m-1},v_{n-1})\}.$$  We consider two cases for the initial position of the robber, either the robber begins at a vertex in $S$ or a vertex in $V(C_m \square C_n) \backslash S$.  

Case 1. Suppose the robber initially occupies a vertex in $V(C_m \square C_n) \backslash S$.  Considering the corresponding positions of the cop and robber in $C_m$ and $C_n$, note that the cop is not adjacent to the robber in at least one of $C_m$ or $C_n$.  Using Lemma~\ref{lem:prime} and the cop-strategy given in the proof of Theorem~\ref{thm:upperH}, the cop can limit the damage to at most $\max\{ {\rm dmg}(C_m)|V(C_n)|,  {\rm dmg}(C_n)|V(C_m)|\}$ vertices.    

Case 2. Suppose the robber initially occupies a vertex in $S$ and without loss of generality, suppose the robber occupies $(u_1,v_1)$. The cop initially passes, the robber damages $(u_1,v_1)$, then the robber moves to $(u_2,v_1)$ or $(u_1,v_2)$.  (If the robber moves to a different neighbour of $(u_1,v_1)$, the cop will next capture the robber and if the robber passes, the cop will continue to pass until the robber moves to a new vertex.)  Without loss of generality, suppose the robber moved to $(u_2,v_1)$.  

Consider an instance of the game where the cop begins at $(u_0,v_0)$, the robber begins at $(u_2,v_1)$, and the cop moves first.  We know from Case 1.~that the cop can limit the damage to at most $\max\{{\rm dmg}(C_m)|V(C_n)|,  {\rm dmg}(C_n)|V(C_m)|\}$ vertices.   Thus, if the cop begins at $(u_0,v_0)$, the robber begins at $(u_1,v_1)$ and the cop initially passes, the cop can prevent the robber from damaging more than $1+\max\{{\rm dmg}(C_m)|V(C_n)|,  {\rm dmg}(C_n)|V(C_m)|\}$ vertices.\end{proof}

\begin{theorem}\label{thm:evencyclelower}If $m,n \geq 4$ and with $m$ and $n$ both even then $${\rm dmg}(C_m \square C_n) = 1 + \max \big\{ {\rm dmg}(C_m)|V(C_n)|, {\rm dmg}(C_n)|V(C_m)| \big\}.$$\end{theorem}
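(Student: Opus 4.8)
The upper bound is exactly Lemma~\ref{thm: upperbound product of cycles}, so the plan is to establish the matching lower bound ${\rm dmg}(C_m \square C_n) \geq 1 + \max\{{\rm dmg}(C_m)|V(C_n)|, {\rm dmg}(C_n)|V(C_m)|\}$ by exhibiting a robber strategy. Since $C_m \square C_n \cong C_n \square C_m$, I may assume without loss of generality that $m \geq n$, so that the maximum equals ${\rm dmg}(C_m)|V(C_n)| = {\rm dmg}(C_m)\cdot n$; writing $C_m \square C_n$ as $n$ disjoint copies of $C_m$ indexed by the second coordinate, the goal is to damage ${\rm dmg}(C_m)$ vertices in each of the $n$ copies and one additional vertex overall. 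Theorem~\ref{thm: cycle_shadow} already guarantees, via the shadow strategy, that the robber can damage ${\rm dmg}(C_m)$ vertices in every copy of $C_m$; the entire difficulty is to extract the single extra vertex that the even parity of both $m$ and $n$ makes available.

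The extra vertex is harvested in the copy where the robber begins. Suppose the cop initially occupies $(u_0, v_0)$ and let the robber begin at the diagonal neighbour $(u_1, v_1)$, one of the vertices of the set $S$ appearing in Lemma~\ref{thm: upperbound product of cycles}. This vertex is at distance two from the cop, so the robber cannot be captured on the cop's first move. The point of starting diagonally is that, within the copy of $C_m$ with second coordinate $v_1$, the projection (shadow) of the cop sits at $u_0$, which is adjacent to the robber's first coordinate $u_1$, while the cop needs a genuine move changing its second coordinate before its true position enters that copy. This reproduces, inside the starting copy, precisely the configuration of Lemma~\ref{lem:prime} in which the robber is adjacent to the cop and the cop is forced to pass on the first round. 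Because $m$ is even, that lemma gives ${\rm dmg}'(C_m) = {\rm dmg}(C_m)+1$, and its proof shows the robber can damage the interval $u_1, u_2, \dots, u_{m/2}$, that is, ${\rm dmg}(C_m)+1$ vertices, inside the starting copy.

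Having banked $(u_1,v_1)$ and advanced to $(u_2, v_1)$, the robber is again at distance two from the cop's shadow, with the damaged vertex $(u_1,v_1)$ lying behind the direction of travel and outside the interval $\{u_2,\dots,u_{m/2}\}$ it will sweep. From this configuration the robber switches to the shadow strategy of Theorem~\ref{thm: cycle_shadow}, maintaining distance at least two from the shadow so that it is never captured and can never be prevented from changing copies, and damaging ${\rm dmg}(C_m)$ fresh vertices in each of the remaining $n-1$ copies of $C_m$. As the shadow strategy only ever moves the robber onto undamaged vertices, $(u_1,v_1)$ is counted once and the starting copy contributes ${\rm dmg}(C_m)+1$ vertices while every other copy contributes ${\rm dmg}(C_m)$. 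Summing gives $({\rm dmg}(C_m)+1) + (n-1){\rm dmg}(C_m) = n\,{\rm dmg}(C_m) + 1 = 1 + \max\{{\rm dmg}(C_m)|V(C_n)|, {\rm dmg}(C_n)|V(C_m)|\}$, as required.

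The main obstacle is the bookkeeping of the first round: I must check, over every possible first move of the cop (a pass, a move aligning the first coordinate to $u_1$, or a move into the starting copy at $(u_0, v_1)$), that the robber can always reach the favourable configuration at $(u_2, v_1)$ at distance two from the shadow with $(u_1, v_1)$ safely banked behind it, and that no single cop move can both deny this extra vertex in the starting copy and still contain the robber to ${\rm dmg}(C_m)$ vertices in the other copies. Once the first round is pinned down and the parity statement of Lemma~\ref{lem:prime} is correctly transplanted into the starting copy, the contribution of the remaining copies follows immediately from the shadow strategy of Theorem~\ref{thm: cycle_shadow}, and the two parts combine to give the claimed bound.
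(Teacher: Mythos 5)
Your overall plan is the paper's own: start the robber diagonally at $(u_1,v_1)$, use the parity statement of Lemma~\ref{lem:prime} to bank ${\rm dmg}(C_m)+1$ damaged vertices in the starting copy, then harvest ${\rm dmg}(C_m)$ vertices per copy via a shadow strategy. But the proposal stops exactly where the proof has to begin: you yourself flag that you ``must check, over every possible first move of the cop'' that the favourable configuration is reachable, and you never perform that check. That case analysis is not routine bookkeeping; it is the entire content of the paper's Phase~1/Phase~2 argument, and at least one of its branches defeats your strategy as stated. If the cop's first move is to $(u_1,v_0)$ --- matching the robber's first coordinate --- then after the forced robber move to $(u_2,v_1)$ the cop's shadow in the starting copy sits at $(u_1,v_1)$, at distance \emph{one} from the robber, and a cop who keeps matching first coordinates prevents the robber from ever restoring shadow-distance two. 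So the ``favourable configuration at $(u_2,v_1)$ at distance two from the shadow'' is not always reachable. The paper deals with coordinate-matching behaviour by a separate argument (Phase~2, case~(2)): such a cop concedes even more damage ($m$ vertices in each of copies $2,\dots,n/2$); and it deals with a cop whose \emph{first} move changes the first coordinate by switching the robber to a symmetric sweep of copies of $C_n$. That second fix is unavailable to you: under your normalization $m\ge n$, the symmetric sweep yields only $1+{\rm dmg}(C_n)\,|V(C_m)|$, which is strictly smaller than your target $1+{\rm dmg}(C_m)\,|V(C_n)|$ whenever $m>n$. So your single-target accounting cannot be completed without the additional idea that a matching cop loses badly; this is a genuine gap, not a deferral of routine detail.

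A secondary problem is the modular use of Theorem~\ref{thm: cycle_shadow}. That theorem concerns the start of the game (the robber places itself at distance two in the cop's copy and reacts to the cop's initial move); it is not a lemma asserting that from an arbitrary mid-game position at shadow-distance at least two the robber can damage ${\rm dmg}(C_m)$ fresh vertices in every remaining copy. The paper does not invoke it as a black box inside this proof; it re-runs the shadow argument inline, with a case split on the shadow distance each time the robber enters a new copy (cases~(1) and~(2) of its Phase~2), precisely because the invariant ``distance at least two from the shadow'' can be destroyed by the cop at the moment of a copy change. Relatedly, your claim that ``the shadow strategy only ever moves the robber onto undamaged vertices'' is not part of, or implied by, Theorem~\ref{thm: cycle_shadow}; its guarantee is only ${\rm dmg}(C_m)$ vertices per copy. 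Your proof needs that inline analysis (or a strengthened, restartable version of Theorem~\ref{thm: cycle_shadow}) before the per-copy count and the final summation are legitimate.
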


\begin{proof}The upper bound follows from Lemma~\ref{thm: upperbound product of cycles}.  For the lower bound, without loss of generality, suppose the cop initially occupies $(u_0,v_0)$ on $C_m \square C_n$.  We will show that if the robber initially occupies $(u_1,v_1)$, they can damage at least $$1+\max \big\{ {\rm dmg}(C_m)|V(C_n)|, {\rm dmg}(C_n)|V(C_m)| \big\}$$ vertices, regardless of the cop's strategy.  For $i \in \{0,1,\dots,n-1\}$, the subgraph induced by the vertex set $\{(u_j,v_i)~:~ 0 \leq j \leq m-1 \}$ is called {\it copy i of $C_m$}.\medskip

{\it Phase 1:} We first assume the cop's initial move is to change their second coordinate to occupy $(u_0,v_{n-1})$ or $(u_0,v_1)$. The robber will play a shadow strategy on copy 1 of $C_m$. To do this, the robber projects the image of the cop from the second coordinate, to its corresponding position in the robber’s copy of $C_m$: the cop’s shadow.

Initially, the cop occupies $(u_0,v_0)$, so the cop's shadow on copy 1 of $C_m$ is at $(u_0,v_1)$.  The robber aims to move within copy 1 of $C_m$ and damage ${\rm dmg}(C_m)+1$ vertices while maintaining a distance of at least two from the cop's shadow after each move of the robber.  Consequently, the robber will initially move from $(u_1,v_1)$ to $(u_2,v_1)$.  Regardless of the cop's subsequent moves, the robber will, at each round, increase their first coordinate until the robber moves to $(u_{m/2},v_1)$ during  round $m/2-1$.  After the cop moves during round $m/2$, the cop's shadow is either at $(u_{m/2-1},v_1)$ or is distance at least two from the robber.  Note that the actual cop either occupies $(u_{m/2-1},v_1)$ or a vertex at least distance two from the robber.  With the robber's move during round $m/2$, vertex $(u_{m/2},v_1)$ is damaged.  As a result, at least $m/2 = {\rm dmg}(C_m)+1$ vertices are damaged in copy 1 of $C_m$.\medskip  

{\it Phase 2:} Without loss of generality, suppose the robber next moves to copy 2 of $C_m$.  Note that this results in the robber being at least distance two from the cop and at least distance one from the cop's shadow in copy 2 of $C_m$.  At the end of round $m/2$ either: \begin{enumerate}[(1)] \item the cop's shadow is at least distance two from the robber in copy 2 of $C_m$; or \item the cop occupies $(u_{m/2-1},v_1)$ and the cop's shadow is distance one from the robber in copy 2 of $C_m$. \end{enumerate}

Suppose (1) occurs.  Then the cop's shadow is at least distance two from the robber in copy 1 of $C_m$ and so after the robber moves to copy 2 of $C_m$, the cop's shadow remains at least distance two from the robber in copy 2 of $C_m$.  The robber can follow the shadow strategy and damage at least ${\rm dmg}(C_m)$ vertices in copy 2 such that at the end of each round, the robber is at least distance two from the cop's shadow.  Once this occurs, the robber moves to another copy of $C_m$ and we are essentially beginning Phase 2 again, but for a new copy of $C_m$.  Continuing in a similar fashion, in the remaining copies of $C_m$, the robber can damage at least ${\rm dmg}(C_m)$ vertices, yielding a total of at least $1+{\rm dmg}(C_m)|V(C_n)|$ damaged vertices.

Suppose (2) occurs. If the cop passes or moves to $(u_{m/2-1},v_2)$ or $(u_{m/2-1},v_0)$, then we are essentially beginning Phase 1 again, but for copy 2 of $C_m$.  In this case, the robber will damage ${\rm dmg}(C_m)+1$ vertices in copy 2 of $C_m$.  Otherwise, suppose the cop moves to $(u_{m/2},v_1)$ during round $m/2+1$.  Then immediately after the cop has moved (but before the robber has moved), the cop's shadow in copy 2 of $C_m$ coincides with the robber's position in copy 2 of $C_m$ (i.e. they have the same first coordinate).  If the cop chooses to maintain the same first coordinate as the robber forever (i.e. whenever the robber changes first coordinate, the cop then changes first coordinate to match), the robber will be able to damage $m$ vertices in copy 2 of $C_m$, $m$ vertices in copy 3 of $C_m$, and so on, to $m$ vertices in copy $n/2$ of $C_m$.  This yields $m(n/2) > {\rm dmg}(C_m)|V(C_n)|$ damaged vertices.  Otherwise, the cop may match the first coordinate as the robber for some rounds, but not others.  During the rounds where the latter occurs, we next show that the robber continues to damage at least ${\rm dmg}(C_m)$ many vertices in copies of $C_m$.

Finally, if (2) occurs and the cop moves to $(u_{m/2-2},v_1)$  during round $m/2+1$, then the cop's shadow is at least distance two from the robber in copy 2 of $C_m$ and the robber proceeds as in (1) above.\medskip

Recall that the above argument assumed the cop's first move was to change their second coordinate.  If the cop initially passes, the robber can damage $(u_1,v_1)$ and move to $(u_2,v_1)$ and following the above argument, damage at least $1+{\rm dmg}(C_m)|V(C_n)|$ vertices.  By a symmetric argument, if the cop's first move is to change their first coordinate, the robber first plays a shadow strategy on copy 1 of $C_n$ and can damage at least $1+{\rm dmg}(C_n)|V(C_m)|$ vertices.\end{proof}

\section{Graphs with small damage number}\label{sec:small}

The characterization of graphs with damage number $0$ was first observed by Cox and Sanaei~\cite{CS2019}.  

\begin{observation}\label{obs:dmg0}
For a graph $G$, ${\rm dmg}(G) = 0$ if and only if $G$ contains a universal vertex.
\end{observation}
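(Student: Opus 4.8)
The statement to prove is Observation~\ref{obs:dmg0}: ${\rm dmg}(G) = 0$ if and only if $G$ contains a universal vertex. The plan is to prove the two directions separately, each of which is short. For the ``if'' direction, I would suppose $G$ has a universal vertex $w$ and place the cop on $w$. Then every other vertex is adjacent to $w$, so the moment the robber chooses any starting vertex $y \neq w$, the cop already occupies a vertex adjacent to $y$; on the cop's next move the cop can step onto $y$ and capture the robber. Crucially, capture happens \emph{before} the robber takes any move, so the robber never passes or moves out of $y$ and hence \emph{no} vertex is ever damaged (recall a vertex is damaged only when the robber, occupying it, subsequently passes or moves). If the robber instead starts on $w$ itself, the robber is captured immediately with no damage. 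Either way ${\rm dmg}(G) = 0$.

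For the ``only if'' direction, I would argue the contrapositive: if $G$ has no universal vertex, then ${\rm dmg}(G) \geq 1$. Suppose the cop occupies an arbitrary vertex $x$. Since $x$ is not universal, there is a vertex $y$ not adjacent to $x$ (and $y \neq x$); the robber starts on such a $y$. Because $y \notin N[x]$, the cop cannot capture the robber on the cop's first move, so after the cop moves the robber is still free to move (or pass), which damages $y$. Thus at least one vertex is damaged regardless of the cop's strategy, giving ${\rm dmg}(G) \geq 1$. Combining the two directions yields the equivalence.

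I do not expect any genuine obstacle here, as both directions reduce to elementary observations about the adjacency structure and the timing of the game. The one point that requires care is the precise interplay between the \emph{damage} definition and the \emph{order of play}: a vertex is damaged only if the robber occupies it and then takes a (possibly trivial) move, so in the universal-vertex case I must confirm that capture on the cop's \emph{first} move pre-empts any robber move. Since the cop moves first in each round and the robber's initial placement counts as round zero, the cop's round-one move captures the robber before the robber has a chance to act, which is exactly what makes the damage zero. For the converse, the symmetric timing consideration guarantees the robber always gets at least one move before any possible capture when starting on a non-neighbour of the cop.
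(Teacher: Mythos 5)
Your proof is correct. Note that the paper itself gives no argument for this statement --- it is quoted as an observation due to Cox and Sanaei \cite{CS2019} --- but your two-direction argument is the standard one, and you correctly handle the only delicate point: since the cop moves first in each round and capture ends the game, a cop starting on a universal vertex captures the robber before the robber ever gets to pass or move, so the robber's starting vertex is never damaged; conversely, with no universal vertex the robber can start at distance at least two from the cop and is therefore guaranteed one damaging move.
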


Suppose $G$ and $H$ are graphs with universal vertices and $|V(G)|=m \leq n = |V(H)|$. To avoid trivial products, we assume both $G$ and $H$ contain at least one edge and recall that all graphs are assumed to be connected.  Then by Theorem~\ref{thm:newlowerbound} and Theorem~\ref{thm:oneuni}, ${\rm dmg}(G \square H) \in \{1,2,\dots,m\}$.  A natural question is the following.

\begin{question} For each integer $m \geq 3$, do there exist graphs $G$ and $H$ with universal vertices where $|V(G)| = m \leq n = |V(H)|$ such that ${\rm dmg}(G \square H) = k$ for each $k \in \{1,2,\dots,m\}$? 
\end{question}

By Theorem~\ref{thm:prodTrees}, a damage number of $1$ can be achieved by considering $K_{1,n} \square K_{1,m}$.  By Theorem~\ref{thm:prodK}, a damage number of $m$ can be achieved by considering the product of cliques of size $m$ and $n$ where $m \leq n$.\medskip

We next consider graphs with damage number $1$.  The following observation simplifies later results by allowing us to assume without loss of generality, that for graphs with damage number $1$, the robber remains on their starting vertex for the entirety of the game.  Suppose ${\rm dmg}(G)=1$ for graph $G$.  Then the robber either passes at every step; or at some step moves to a new vertex and is captured when the cop next moves.

\begin{observation}
\label{obs:dmg1robber}
Let $G$ be a graph. If ${\rm dmg}(G) = 1$, there is no benefit for the robber to move during the game.
\end{observation}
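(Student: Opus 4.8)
The plan is to show that, when ${\rm dmg}(G)=1$, the robber has an optimal strategy that never leaves its starting vertex, so forbidding movement costs no generality. I would open by recording the two facts that drive everything. First, since ${\rm dmg}(G)=1>0$, Observation~\ref{obs:dmg0} tells us $G$ has no universal vertex; in particular, whatever vertex $c$ the cop occupies in round zero is not universal, so there is a vertex $r$ with ${\rm dist}(c,r)\ge 2$. Second, by definition of the damage number, the cop has a strategy that keeps the total number of damaged vertices at most ${\rm dmg}(G)=1$ against every robber response. I would have the robber begin on such a vertex $r$.

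The core of the argument is the simple dichotomy flagged in the paragraph preceding the statement. If the robber passes every round, then $r$ is damaged after the first round (the robber occupies $r$ in round zero and passes in round one), and the cop cannot have captured it beforehand because $r$ lies outside the cop's closed neighbourhood in round one; so the passive robber damages exactly one vertex. Now suppose instead the robber makes an active move at some point, and consider the \emph{first} such round, say the robber moves from a vertex $v$ to a new vertex $w$. That move damages $v$. Were $w$ ever damaged as well, the total would reach two, contradicting the cop's guarantee of at most one; hence $w$ must never be damaged. But the only way a vertex the robber occupies is never damaged is if the robber never again takes a turn there, which forces the cop to capture the robber on $w$ with its very next move. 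In both branches the robber finishes with exactly one damaged vertex, matching what passing already achieves, so movement yields no advantage.

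The step to be most careful with is the bookkeeping of \emph{damaged} versus \emph{captured}: a vertex counts as damaged only once the robber occupies it and then passes or moves, which means (i) the starting vertex really is damaged by a single pass, and (ii) a vertex entered immediately before capture is \emph{not} damaged, since the robber takes no further turn there. Getting these boundary cases right is exactly what legitimizes the claim that a moved-to vertex contributes no damage when capture follows, and hence what makes the dichotomy tight at one damaged vertex. Once this is in place, combining ``passing guarantees one vertex'' with ``no strategy exceeds one vertex'' shows the stationary strategy is optimal for the robber, so throughout later arguments we may assume the robber remains on its starting vertex whenever ${\rm dmg}(G)=1$.
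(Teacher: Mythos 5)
Your proof is correct and takes essentially the same route as the paper, which justifies the observation by the same dichotomy: against a cop strategy limiting damage to ${\rm dmg}(G)=1$, the robber either passes at every step, or at some step moves to a new vertex and is captured on the cop's very next move, so exactly one vertex is damaged either way. Your careful bookkeeping that a vertex entered immediately before capture is never damaged is precisely the point the paper leaves implicit.
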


\begin{lemma}
\label{lemma: dmg1rad2}
For a graph $G$, if ${\rm dmg}(G) = 1$, then ${\rm rad}(G) = 2$.\end{lemma}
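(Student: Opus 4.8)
The plan is to sandwich ${\rm rad}(G)$ between $2$ and $2$ using two facts already available in the excerpt: the radius lower bound for damage, and the characterization of damage number $0$. Both directions are short, so the work is really just assembling them correctly and handling the one degenerate case (the single-vertex graph / radius-$0$ situation).

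First I would establish the upper bound ${\rm rad}(G) \le 2$. By Theorem~\ref{thm:newlowerbound}, every graph satisfies ${\rm dmg}(G) \ge {\rm rad}(G) - 1$. Substituting the hypothesis ${\rm dmg}(G) = 1$ gives $1 \ge {\rm rad}(G) - 1$, i.e. ${\rm rad}(G) \le 2$, immediately.

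Next I would establish the lower bound ${\rm rad}(G) \ge 2$ by ruling out the smaller possibilities. A graph has radius $1$ precisely when it contains a vertex adjacent to all others, that is, a universal vertex; and it has radius $0$ only in the trivial case $G \cong K_1$. If ${\rm rad}(G) = 1$, then $G$ has a universal vertex, so Observation~\ref{obs:dmg0} forces ${\rm dmg}(G) = 0$, contradicting ${\rm dmg}(G) = 1$. Likewise ${\rm rad}(G) = 0$ forces $G \cong K_1$, which again has ${\rm dmg}(G) = 0$ (indeed $K_1$ has a universal vertex vacuously), contradicting the hypothesis. Hence ${\rm rad}(G) \ge 2$, and combining with the previous paragraph yields ${\rm rad}(G) = 2$.

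I do not expect any real obstacle here, since the statement is essentially a corollary of the two cited results; the only thing to be careful about is not overlooking the radius-$0$ case, which is dispatched by recalling that $G$ is assumed non-empty and connected and that ${\rm dmg}(G) = 1 \neq 0$ rules out $G \cong K_1$. If one prefers, the two excluded cases can be merged by noting that ${\rm rad}(G) \le 1$ is equivalent to the presence of a universal vertex, which Observation~\ref{obs:dmg0} equates with ${\rm dmg}(G) = 0$.
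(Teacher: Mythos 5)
Your proof is correct and follows essentially the same route as the paper: the upper bound ${\rm rad}(G)\le 2$ from Theorem~\ref{thm:newlowerbound}, and the exclusion of radius $0$ and $1$ via the universal-vertex characterization (Observation~\ref{obs:dmg0}), which forces ${\rm dmg}(G)=0$ in those cases. Your write-up is just slightly more explicit about the radius-$0$ case and the citation of the observation.
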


\begin{proof}
Suppose ${\rm dmg}(G)=1$ for graph $G$.  By Theorem~\ref{thm:newlowerbound}, ${\rm rad}(G) \in \{0,1,2\}$. If ${\rm rad}(G) \in \{0,1\}$, then $G$ is an isolated vertex or has a universal vertex and ${\rm dmg}(G)=0$.\end{proof}

Note that the converse of Lemma \ref{lemma: dmg1rad2} is not true: ${\rm rad}(C_5) = 2$ but ${\rm dmg}(C_5) \neq 1$. Using Observation \ref{obs:dmg1robber} and Lemma \ref{lemma: dmg1rad2},  we characterize graphs with damage number $1$. We note that for the class of graphs with cop number $2$, a characterization for damage number $1$ graphs was given by Carlson et al.~\cite{throttling} in the context of throttling. 

\begin{theorem}\label{thm:dmg1}
 For a graph $G$, ${\rm dmg}(G) = 1$ if and only if ${\rm rad}(G) = 2$ and a center of the graph $c \in V(G)$ is such that  for all $w \in V(G) \setminus N[c]$ there exists $s \in N[c]$ such that $s$ dominates $w$.
 \end{theorem}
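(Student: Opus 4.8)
The plan is to prove the two directions separately, treating the condition as a cop strategy on one side and as a structural consequence of optimal play on the other. For the \emph{sufficiency} direction, I would assume ${\rm rad}(G)=2$ and that some center $c$ has the stated domination property. Since ${\rm rad}(G)=2$, the graph $G$ has no universal vertex, so ${\rm dmg}(G)\geq 1$ by Observation~\ref{obs:dmg0}; it then remains to exhibit a cop strategy limiting the damage to one vertex. The cop starts at $c$. If the robber starts in $N(c)$ it is captured on the cop's first move and nothing is damaged. If the robber starts at some $w\in V(G)\setminus N[c]$, the cop moves on the first round to a vertex $s\in N[c]$ dominating $w$ (such $s$ exists by hypothesis and is reachable from $c$ precisely because $s\in N[c]$), and thereafter passes until the robber moves. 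Because $N(w)\subseteq N[s]$, every vertex the robber could move to lies in $N[s]$, so the cop captures on its next move and the robber damages only $w$. This gives ${\rm dmg}(G)\leq 1$, and combined with the lower bound, ${\rm dmg}(G)=1$.

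For the \emph{necessity} direction, suppose ${\rm dmg}(G)=1$. Lemma~\ref{lemma: dmg1rad2} already gives ${\rm rad}(G)=2$, so I would fix an optimal cop strategy, let $c_0$ be its initial vertex, and aim to show $c_0$ is a center satisfying the domination condition. The key claim is that for every $w\in V(G)\setminus N[c_0]$ there is an $s\in N[c_0]$ dominating $w$. To see this, let the robber start at such a $w$; since $w\notin N[c_0]$ the cop cannot capture on the first round, so the cop moves to some $c_1\in N[c_0]$, after which the robber damages $w$. If some neighbour $w'$ of $w$ satisfied $w'\notin N[c_1]$, the robber could move to $w'$, avoid capture on the next round, and then damage $w'$, contradicting ${\rm dmg}(G)=1$ (this is exactly the content of Observation~\ref{obs:dmg1robber}, that any robber move under optimal cop play must be met by immediate capture). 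Hence $N(w)\subseteq N[c_1]$, and $s:=c_1$ is the required dominating vertex.

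It then remains to show $c_0$ is a center, i.e.\ that $c_0$ has eccentricity $2$; this is the step I expect to be the main obstacle, since the domination condition alone only appears to bound the eccentricity weakly when applied to a far vertex directly. The trick I would use is to apply the condition not to the distant vertex but to its penultimate vertex on a shortest path. Suppose for contradiction that the eccentricity of $c_0$ is at least $3$ and take $u$ with ${\rm dist}(c_0,u)=3$; let $b$ be the vertex at distance $2$ from $c_0$ on a shortest $c_0$--$u$ path, so $b\sim u$. Applying the claim to $b\in V(G)\setminus N[c_0]$ yields $s\in N[c_0]$ with $N(b)\subseteq N[s]$; since $u\in N(b)$ we get $u\in N[s]$, whence ${\rm dist}(c_0,u)\leq {\rm dist}(c_0,s)+{\rm dist}(s,u)\leq 1+1=2$, a contradiction. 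Therefore the eccentricity of $c_0$ is at most $2$, and since ${\rm rad}(G)=2$ forces it to be at least $2$, the vertex $c_0$ is a center. Taking $c=c_0$ completes the characterization.
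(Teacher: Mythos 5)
Your proof is correct, and the two halves compare differently against the paper. The sufficiency half is essentially the paper's argument: the cop starts at the center $c$, answers a robber at $w \notin N[c]$ by moving to a dominating vertex $s \in N[c]$, then passes until the robber moves into $N[s]$ and is captured; whether the lower bound ${\rm dmg}(G) \geq 1$ comes from Observation~\ref{obs:dmg0} (your route) or from Theorem~\ref{thm:newlowerbound} (the paper's) is immaterial. The necessity half is where you genuinely depart. The paper proves the contrapositive via two robber strategies: if the cop starts at a vertex that has some vertex at distance $3$, the robber runs along that shortest path and damages two vertices; and if the cop starts at a center for which the domination condition fails, the robber starts at the bad vertex $w$ and escapes to a neighbour lying outside the closed neighbourhood of the cop's new position. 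You argue directly from an optimal cop strategy with initial vertex $c_0$: your escape argument (each cop response $c_1$ must dominate the robber's start $w$, else two vertices are damaged) is the same game-theoretic core as the paper's second strategy, but you then settle the eccentricity of $c_0$ structurally rather than with a second robber strategy, applying the domination claim to the penultimate vertex $b$ of a shortest path to a hypothetical distance-$3$ vertex $u$, which forces $u \in N[s]$ for some $s \in N[c_0]$ and hence ${\rm dist}(c_0,u) \leq 2$. Both treatments are sound. Yours buys a cleaner logical shape --- no negation of the ``there exists a center'' quantifier is needed (a point where the paper's stated contrapositive, quantified over all $c \in V(G)$ rather than over centers only, is looser than what its proof actually uses) --- together with the tidy observation that the domination property of the cop's responses by itself forces the eccentricity of $c_0$ to be at most $2$. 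The paper's version buys independence of its two cases: the distance-$3$ robber strategy stands on its own and requires no prior claim about domination.
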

 
 \begin{proof}
 For the forward direction, consider the contrapositive. Assume $G$ is a graph where either ${\rm rad}(G) \not= 2$ or for all $c \in V(G)$, there exists $w \in V(G) \setminus N[c]$ such that for each $s \in N[c]$, $N(w) \not\subseteq N[s]$. We will show that ${\rm dmg}(G) \neq 1$.
  
 If ${\rm rad}(G) = 0$, then $G$ is an isolated vertex and trivially ${\rm dmg}(G)=0$. If ${\rm rad}(G) = 1$ or ${\rm rad}(G) >2$, then ${\rm dmg}(G) \not= 1$ by the contrapositive of Lemma \ref{lemma: dmg1rad2}.  Consequently, we assume ${\rm rad}(G) = 2$ and show that no matter the strategy of the cop, the robber can damage more than one vertex in $G$. Since ${\rm rad}(G) = 2$, no matter where the cop places themselves in $G$, there always is a vertex at least distance two away. Suppose the cop initially occupies some vertex $v \in V(G)$. If there is a vertex at distance $3$ away from the cop, the robber can damage at least two vertices. Otherwise, the cop must be located on a center since the graph has radius $2$.  Given the properties of $G$, there exists $w \in V(G) \setminus N[v]$ such that 
 $N(w) \not\subseteq N(v)$ and for all $s \in N(v)$, $N(w) \not\subseteq N[s]$. We next show that if the robber starts on such a vertex $w$, they can damage at least two vertices.

 In the first round, the cop has a choice to either stay  on vertex $v$ or move to some vertex $s_i \in N(v)$ where $i \in \{1, 2,\ldots, k\}$ and $|N(v)|= k$. 

 Since $N(w) \not\subseteq N(v)$, there exists some vertex $t \in N(w)$ where $t \not\in N(v)$.  
 If the cop stays on vertex $v$ in the first round, then the robber can damage $w$ and move to vertex $t$. Since $t \not \in N(v)$, during the second round, the cop is still at least a distance of two away from the robber. No matter where the cop moves in the second round, the robber can damage vertex $t$ by remaining on it for the second round. Thus, the robber damages at least two vertices. 
  
 Suppose instead that the cop moves to vertex $s_i$ during the first round. Given the properties of $G$, we know there is some vertex $r \in N(w)$ where $r \not \in  N[s_i]$. Note that $r$ is not necessarily distinct from the vertex $t$ where $t \in N(w)$ but $t \not\in N(v)$. During the first round, the robber can damage $w$ and move to vertex $r$.  At this point, since $r \not \in N[s_i]$, the robber is still at least distance of two away from the cop.  No matter where the cop moves in the second round, the robber can damage $r$ by remaining on it for the second round.  Thus, the robber damages at least two vertices. 
 
For the reverse implication, assume that $G$ is a graph with radius $2$ and the conditions of the theorem hold. We will show that if the cop initially occupies a center vertex $c$, they have a strategy to protect $|V(G)| -1$ vertices and the robber has a strategy to damage one vertex. Since ${\rm rad}(G) = 2$, we know ${\rm dmg}(G) \geq 1$ by Theorem~\ref{thm:newlowerbound}. 

If the cop initially occupies a center $c$, then the cop is protecting $N[c]$ from damage in the first round. Therefore the robber will initially occupy some vertex $w \in V(G) \setminus N[c]$. 

By the way we have defined $G$, we know there is a vertex $s \in N[c]$ that dominates $w$.  If $s=c$ the cop can protect $|V(G)|-1$ by passing in all rounds, unless the robber moves to a neighbour of $c$.  Otherwise, the cop initially moves from $c$ to $s$ such that $s$ dominates $w$.  The cop protects $|V(G)|-1$ vertices by passing in all subsequent rounds, unless the robber moves to a neighbour of $s$.\end{proof}

Note that by the characterization of damage number $1$ graphs, for any vertex $w$ not adjacent to center $c$ either: $c$ o-dominates $w$ or there exists $s \in N(c)$ such that $s$ dominates $w$.  So to bound ${\rm dmg}(G \square H)$ for graphs $G$ and $H$ with damage number $1$, we consider two situations: at least one of the coordinates of the robber's initial position satisfies the former, or both coordinates of the robber's initial position satisfy the latter. For the first situation, we can easily bound the damage.

\begin{proposition}\label{propx} Let $G$ and $H$ be graphs for which ${\rm dmg}(G)={\rm dmg}(H)=1$.  Let $u$ and $v$ be centers in $G$ and $H$, respectively, as described in Theorem~\ref{thm:dmg1}.  If, \begin{enumerate}[(1)]

\item for all $w \notin N_G(u)$, $u$ o-dominates $w$ in $G$, or

\item for all $w \notin N_H(v)$, $v$ o-dominates $w$ in $H$; \end{enumerate}then ${\rm dmg}(G \square H) \leq \max \{ |V(G)|,|V(H)|\}$.\end{proposition}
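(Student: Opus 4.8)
The plan is to reduce everything to Theorem~\ref{thm:upperH} together with the commutativity $G\square H\cong H\square G$. By that symmetry I may assume condition (2) holds, so that $v$ $o$-dominates every vertex of $H$ outside $N_H(v)$. Recall that Theorem~\ref{thm:upperH} gives ${\rm dmg}(G\square H)\le\max\{{\rm dmg}(G)|V(H)|,{\rm dmg}'(H)|V(G)|\}$, and since ${\rm dmg}(G)=1$ this reads $\max\{|V(H)|,{\rm dmg}'(H)|V(G)|\}$. As ${\rm dmg}'(H)\in\{{\rm dmg}(H),{\rm dmg}(H)+1\}=\{1,2\}$, the target bound $\max\{|V(G)|,|V(H)|\}$ follows immediately once I show that the $o$-domination hypothesis lets the cop treat the $H$-coordinate as contributing only ${\rm dmg}(H)=1$ damaged copy of $G$ rather than ${\rm dmg}'(H)$ many. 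So the whole proposition comes down to upgrading the factor ${\rm dmg}'(H)$ to ${\rm dmg}(H)$ in the second coordinate.

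To see why $o$-domination should buy this, I would run the cop strategy of Theorem~\ref{thm:upperH} essentially verbatim, with the cop starting at $(u,v)$ and making its first move in the first coordinate (so $G$ plays the ``cop-moves-first'' role and $H$ the ``cop-passes-first'' role). The point to isolate is the robber's first change of second coordinate, which is exactly what produces the extra ``$+1$'' copy of $G$ in the definition of ${\rm dmg}'(H)$. Whenever the robber sits on a vertex whose second coordinate $r_2$ lies outside $N_H(v)$, condition (2) gives $N_H(r_2)\subseteq N_H(v)$; hence any move the robber makes in the second coordinate lands on a neighbour of $v$, where the cop---holding $v$ in its own second coordinate---can immediately match and contain. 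Consequently the robber should never be able to open a genuinely new copy of $G$ by a second-coordinate move, so its damaged vertices would occupy at most ${\rm dmg}(H)=1$ distinct second coordinates, and the $S'_H$-type set of Theorem~\ref{thm:upperH} collapses from $2|V(G)|$ to $|V(G)|$. Combined with $|S_G|=|V(H)|$ this gives ${\rm dmg}(G\square H)\le\max\{|V(G)|,|V(H)|\}$.

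The main obstacle is exactly this last reduction, and it is more delicate than it first appears because a cop move fixes only one of the two coordinates in a given round. The dangerous configuration is a robber whose initial second coordinate is a \emph{neighbour} of $v$ rather than an $o$-dominated vertex: such a robber can try to slip to a distance-two vertex of $H$ before the cop has synchronised, and it is precisely here that the $o$-domination hypothesis---together with the fact that ${\rm dmg}(H)=1$ forces ${\rm rad}(H)=2$ by Lemma~\ref{lemma: dmg1rad2}, so some neighbour of $v$ is adjacent to a distance-two vertex---must be handled with care to show that any such excursion is either recaptured or stays within the single copy of $G$ already accounted for. Pinning down this case, using Observation~\ref{obs:dmg1robber} to limit the robber's useful moves, is the crux of the argument; the condition-(1) case is then immediate from $G\square H\cong H\square G$.
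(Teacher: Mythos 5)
Your proposal does not close, and the gap sits exactly at the step you defer as ``the crux.'' The claim on which everything rests --- that condition (2) lets the cop confine the robber's damage to ${\rm dmg}(H)=1$ distinct second coordinates, i.e.\ that the factor ${\rm dmg}'(H)$ in Theorem~\ref{thm:upperH} can be upgraded to ${\rm dmg}(H)$ --- is false. First, the hypothesis does not force ${\rm dmg}'(H)={\rm dmg}(H)$: the graph $H=C_4$ satisfies condition (2) with any vertex as $v$, yet ${\rm dmg}'(C_4)={\rm dmg}(C_4)+1=2$ by Lemma~\ref{lem:prime}. Second, the product version of your claim fails against \emph{every} cop strategy, not just against strategies built on Theorem~\ref{thm:upperH}. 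Since ${\rm dmg}(H)=1$ forces ${\rm rad}(H)=2$ (Lemma~\ref{lemma: dmg1rad2}), there exist $y\in N_H(v)$ and $y'\in N_H(y)$ with ${\rm dist}_H(v,y')=2$; likewise there exists $x$ with ${\rm dist}_G(u,x)=2$. Let the robber start at $(x,y)$. Checking the product adjacency conditions, no vertex of $N[(u,v)]$ is equal or adjacent to $(x,y)$ or to $(x,y')$ (each adjacency would need $x=u$, $x\sim u$, $y'=v$, or $y'\sim v$, all false). So whatever the cop's first move is, the robber damages $(x,y)$, moves to $(x,y')$, survives the cop's second move, and damages $(x,y')$ as well: the damaged set already meets two distinct second coordinates. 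Your fallback, ``match and contain,'' is likewise unsupported: after the cop matches a coordinate, the only containment tool you cite is the symmetric form of Lemma~\ref{lem:Phase2}, which bounds further damage by ${\rm dmg}'(G)|V(H)|$, possibly $2|V(H)|$, exceeding the target $\max\{|V(G)|,|V(H)|\}$.

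The paper's proof takes a different route and supplies the mechanism you are missing: it never uses Theorem~\ref{thm:upperH}, but instead plays \emph{two-sided} domination. Under condition (1), $u$ o-dominates the robber's first coordinate $x$ (note this applies only when $x\notin N_G(u)$), while Theorem~\ref{thm:dmg1} applied to $H$ (legitimate since ${\rm dmg}(H)=1$) supplies $s\in N_H[v]$ dominating the robber's second coordinate $y$ when $y\notin N_H[v]$; if $y\in N_H(v)$ the cop matches that coordinate outright. The cop moves to $(u,s)$ and thereafter matches whichever coordinate the robber changes; once one coordinate is matched, any robber move in the \emph{other} coordinate lands, by o-domination, on a neighbour of the cop's coordinate while the matched coordinates remain equal --- hence on a neighbour of the cop in $G\square H$ --- and the robber is captured. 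So the robber can only ever change one coordinate, giving the bound. This use of the Theorem~\ref{thm:dmg1} structure of the other factor is what your argument never invokes and cannot do without. Finally, your instinct about the dangerous configuration is sharper than you realize: the paper's own proof silently assumes $u$ o-dominates $x$ and never treats a robber starting with $x\in N_G(u)$, and that case is genuinely fatal to the statement as written. Indeed $G=H=C_4$ satisfies every hypothesis of the proposition, so it would give ${\rm dmg}(C_4\square C_4)\le 4$, while Theorem~\ref{thm:evencyclelower} gives ${\rm dmg}(C_4\square C_4)=1+\max\{1\cdot 4,\,1\cdot 4\}=5$. No completion of your argument (or of the paper's) can sidestep that configuration; it is where the proposition itself conflicts with the rest of the paper.
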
 

\begin{proof}  Suppose the cop initially occupies $(u,v)$ where $u$ and $v$ are as described in the theorem statement; and the robber initially occupies some vertex $(x,y)$. Assume $u$ o-dominates $x$ in $G$ and for now, assume $y \not\in N_H(v)$. If $v$ o-dominates $y$ in $H$, then the cop in $G \square H$ initially passes.  Otherwise, there exists $s \in N_H(v)$ such that $s$ o-dominates $y$ in $H$ and the cop in $G \square H$ moves to $(u,s)$.  Observe that the cop's first and second coordinates now o-dominate the robber's first and second coordinates, in their respective input graphs.  The robber must pass indefinitely (and damage only one vertex) or during some round, change a coordinate.  Whenever the robber changes first (second) coordinate, the cop changes first (second) coordinate to match.  Without loss of generality, suppose the robber changes second coordinate.  Then the cop changes second coordinate to match that of the robber.  If the robber now changes first coordinate, the cop will move and capture the robber.  Thus, to maximize the number of damaged vertices, the robber can only ever change second coordinate and at most $|V(H)|$ vertices are damaged.

If $y \in N_H(v)$, the cop in $G \square H$ initially moves to $(u,y)$ to match the second coordinate of the robber.  As above, if the robber changes first coordinate, the cop will capture the robber.\end{proof}

By Corollary~\ref{corcor}, if ${\rm dmg}(G)=1={\rm dmg}(H)$ and $|V(G)| \leq |V(H)|$ then $${\rm dmg}(G \square H) \leq \begin{cases} ~|V(H)| & \text{~if~$|V(H)| \geq 2|V(G)|$} \\ 2|V(G)| & \text{~if~$|V(H)|<2|V(G)|$.}\end{cases}$$ We leave as a question, whether the bounds can be improved for the latter case.

\begin{question} Let $G$ and $H$ be graphs for which ${\rm dmg}(G)={\rm dmg}(H)=1$ and suppose $|V(G)|\leq |V(H)| < 2|V(G)|$.  We further suppose that neither condition (1) nor (2) of Proposition~\ref{propx} holds.  Can the upper bound of ${\rm dmg}(G \square H) \leq  2|V(G)|$ be improved? \end{question}

Observe that any graph with damage number $2$ must have radius $2$ or radius $3$.  We next characterize graphs with damage number $2$.

\begin{lemma}
Let $G$ be a graph. If ${\rm dmg}(G) = 2$, then the radius of $G$ must be $2$ or $3$.
\end{lemma}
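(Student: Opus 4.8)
The plan is to bracket the radius from above and below using results already in hand, exactly mirroring the argument for Lemma~\ref{lemma: dmg1rad2}. For the upper bound, I would invoke Theorem~\ref{thm:newlowerbound}, which gives ${\rm dmg}(G) \geq {\rm rad}(G)-1$ for every graph. Substituting the hypothesis ${\rm dmg}(G)=2$ yields $2 \geq {\rm rad}(G)-1$, hence ${\rm rad}(G) \leq 3$ immediately.

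For the lower bound on the radius, I would rule out the small cases ${\rm rad}(G)\in\{0,1\}$ by appealing to the characterization of damage number $0$. If ${\rm rad}(G)=0$, then $G$ is a single isolated vertex and ${\rm dmg}(G)=0$; if ${\rm rad}(G)=1$, then $G$ contains a universal vertex, so by Observation~\ref{obs:dmg0} we again have ${\rm dmg}(G)=0$. Either conclusion contradicts ${\rm dmg}(G)=2$, so ${\rm rad}(G) \geq 2$.

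Combining the two bounds gives ${\rm rad}(G)\in\{2,3\}$, as required. There is no genuine obstacle here: the statement is a direct consequence of the radius lower bound together with the damage-$0$ characterization, and the only thing to be careful about is handling the degenerate radius values $0$ and $1$ cleanly rather than the bound $2\geq{\rm rad}(G)-1$ alone (which would permit ${\rm rad}(G)\leq 1$). This is why the proof reduces to citing Theorem~\ref{thm:newlowerbound} for the ceiling and Observation~\ref{obs:dmg0} for the floor.
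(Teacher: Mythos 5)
Your proof is correct and follows essentially the same route as the paper: apply Theorem~\ref{thm:newlowerbound} to get ${\rm rad}(G) \leq 3$, then eliminate ${\rm rad}(G) \in \{0,1\}$ via the single-vertex and universal-vertex cases (the paper states the latter directly rather than citing Observation~\ref{obs:dmg0}, but the content is identical).
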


\begin{proof} Let $G$ be a graph for which ${\rm dmg}(G)=2$.  By Theorem~\ref{thm:newlowerbound}, ${\rm rad}(G) \in \{0,1,2,3\}$.  If ${\rm rad}(G) = 0$, then the graph has a single vertex and no vertices can be damaged. Similarly, if ${\rm rad}(G) = 1$, then there exists a universal vertex and the damage number of $G$ is $0$.  Thus, ${\rm rad}(G) \in \{2,3\}$.\end{proof}  

Note that the converse is false for both radius $2$ and radius $3$.  Cycles $C_4$ and $C_7$ are counterexamples to all radius $2$ and radius $3$ graphs (respectively) having damage number $2$.

\begin{theorem}\label{thm: dmg 2 rad 2}  Let $G$ be a graph with ${\rm rad}(G)=2$ or ${\rm rad}(G)=3$ and ${\rm dmg}(G) \neq 1$.  Then ${\rm dmg}(G)=2$ if and only if there exist vertices $z,y \in V(G)$ and $s_y \in N[z]$ such that  \begin{enumerate}[(1)]

\item ${\rm dist}_G(z,y) \in \{2,3\}$, and

\item no vertex in $N[z]$ dominates $y$, and 

\item $\forall~x \in N(y) \backslash N[s_y]$, $\exists~s_x \in N[s_y]$ and $v \in N[s_x]$ such that $$N(x)\backslash \{y\} \subseteq N[s_x]~\text{and}~N(y) \backslash \{x\} \subseteq N[v];$$ \end{enumerate} and for all $w \in V(G)\backslash \{y\}$ such that ${\rm dist}_G(z,w) \in \{2,3\}$, the above three conditions apply; or the conditions for ${\rm dmg}(G)=1$ apply.\end{theorem}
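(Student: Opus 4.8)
The plan is to exploit that the hypotheses already force ${\rm dmg}(G) \geq 2$ (since ${\rm rad}(G) \geq 2$ gives ${\rm dmg}(G) \geq 1$ by Theorem~\ref{thm:newlowerbound}, and ${\rm dmg}(G) \neq 1$ is assumed), so the statement reduces to showing that ${\rm dmg}(G) \leq 2$ if and only if the displayed conditions hold for some choice of $z$. Throughout I read $z$ as the cop's initial vertex. Because a robber starting at distance $1$ from $z$ is captured before damaging anything, while a robber following a shortest path from a vertex at distance $\geq 4$ damages at least three vertices, the cop must choose $z$ with eccentricity at most $3$ (a center suffices, as ${\rm rad}(G) \leq 3$), and only robber starts at distance $2$ or $3$ are relevant --- exactly the range quantified in the theorem.

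First I would prove the reverse implication (conditions $\Rightarrow {\rm dmg}(G) = 2$) by exhibiting a cop strategy from $z$. The lower bound is immediate: placing the robber at the guaranteed vertex $y$, condition~(2) says no vertex of $N[z]$ dominates $y$, so whatever the cop's first move $s_y \in N[z]$ is, some $x \in N(y)$ lies outside $N[s_y]$; the robber damages $y$, slips to $x$ unthreatened, and damages a second vertex. For the upper bound I would fix the robber's start $w$ at distance $2$ or $3$. If the ${\rm dmg}=1$ condition of Theorem~\ref{thm:dmg1} holds at $w$, the cop moves to the dominating vertex and confines the robber to $w$. Otherwise conditions~(1)--(3) apply to $w$, and I would run an \emph{oscillation strategy}: the cop moves $z \to s_y$; when the robber escapes to some $x \in N(w) \setminus N[s_y]$ the cop moves $s_y \to s_x$, which by $N(x)\setminus\{w\} \subseteq N[s_x]$ leaves the robber no safe move from $x$ except back to $w$; when the robber returns, the cop moves $s_x \to v$, which by $N(w)\setminus\{x\} \subseteq N[v]$ leaves only $x$ available. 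Since $s_x \in N[s_y]$ and $v \in N[s_x]$ every cop move is legal and the cop can alternate $s_x \leftrightarrow v$, trapping the robber in $\{w,x\}$ (or capturing it); either way exactly two vertices are damaged. A short case check confirms the robber is never captured before reaching $x$ and that passing never helps.

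Next I would establish the forward implication as its contrapositive: if for the optimal cop's initial vertex $z$ the conditions fail, the robber can damage at least three vertices. Taking $z$ to be that optimal start, confinement to two vertices means every relevant $w$ is handled; since ${\rm dmg}(G) \neq 1$, not every such $w$ can be confined to one vertex, so some $y$ has no dominator in $N[z]$, giving condition~(2) and witnessing the existence clause. The content is then local to a far vertex $w$ where neither the ${\rm dmg}=1$ condition nor~(1)--(3) holds, and I would convert each mode of failure of~(3) into a robber escape: if no $s_x \in N[s_y]$ covers $N(x)\setminus\{w\}$, then against any cop reply from $s_y$ the robber finds a third, undamaged neighbour of $x$; if such an $s_x$ exists but no $v \in N[s_x]$ covers $N(w)\setminus\{x\}$, then after the robber returns to $w$ the cop cannot block all of $N(w)\setminus\{x\}$ and the robber reaches a third vertex. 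In both cases ${\rm dmg}(G) \geq 3$, a contradiction.

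The hard part will be the forward direction: making rigorous that confinement of the robber to two vertices \emph{under an arbitrary cop strategy} forces the specific neighbourhood-containment inequalities of~(3). The difficulty is that the cop need not play the clean oscillation --- it may pass, revisit vertices, or delay --- so I must argue that the cop's positions at the moments the robber occupies $w$, reaches $x$, and returns to $w$ necessarily satisfy the containments, and that these positions can be extracted as the witnesses $s_y$, $s_x$, and $v$. Handling the radius-$2$ and radius-$3$ cases uniformly (in particular verifying that a distance-$3$ start yields the same local picture) and tracking which neighbour of $w$ serves as the robber's escape vertex through the case analysis is where most of the bookkeeping will live.
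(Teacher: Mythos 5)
Your proposal follows essentially the same route as the paper's proof: the same reduction to cop starts $z$ of eccentricity at most $3$ with robber starts at distance $2$ or $3$, the same extraction of the witnesses $s_y$, $s_x$, $v$ from the cop's forced replies in the forward direction (your contrapositive phrasing is just a logical repackaging of the paper's direct argument), and the same oscillation between $s_x$ and $v$ trapping the robber in $\{y,x\}$ for the reverse direction. The steps you flag as remaining bookkeeping are exactly the ones the paper carries out, so the plan is sound and matches the published proof.
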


\begin{proof}
Let $G$ be a graph with radius $2$ or radius $3$ and damage number of $G$ is $2$.  We next show that the theorem conditions hold. The cop initially occupies some vertex $z \in V(G)$.

If ${\rm rad}(G)=2$, the eccentricity of $z$ is at most $4$.  If the eccentricity of $z$ equals $4$, then there is a shortest path $P=(z,v_1,v_2,v_3,v_4)$. The robber can initially occupy $v_2$ and damage $v_2,v_3,v_4$, which contradicts ${\rm dmg}(G)=2$. If ${\rm rad}(G)=3$,  the eccentricity of $z$ is at most $6$.  If the eccentricity of $z$ is $4$, $5$, or $6$, then as in the ${\rm rad}(G)=2$ case, the robber can damage at least three vertices, a contradiction. Thus, a necessary condition on $z$ is that every vertex is within distance three of $z$.  

The robber must start on a vertex at distance two or three from $z$, since otherwise the robber would be captured in the first round. Label this vertex $y$, so ${\rm dist}_G(z,y) \in \{2,3\}$.  Since ${\rm dmg}(G) = 2$,  no vertex in $N[z]$ dominates $y$. Thus, regardless of where the cop moves on the first round, say $s_y$, there must exist a safe vertex to which the robber can move. So, there is at least one vertex $x \in N(y) \backslash N[s_y]$.

The cop first moves to some vertex $s_y \in N[z]$: the closed neighbourhood allows for the possibility that the cop passes and $s_y = z$.  Then the robber moves to some vertex $x \in N(y)\backslash N[s_y]$.  Since the damage number is $2$, the robber is not captured by the next move of the cop; the cop's next move is to some vertex $s_x \in N[s_y]$ (allowing the possibility that the cop passes and $s_x = s_y$). Since the damage number is $2$, there can be no neighbour of $x$, apart from possibly $y$, that is not adjacent to $s_x$.  Thus, $N(x) \backslash \{y\} \subseteq N[s_x]$.  If $s_x \not\sim y$, the robber can move back to $y$ and so there must be a vertex $v \in N[s_x]$ to which the cop can move such that $N(y)\backslash \{x\} \subseteq N[v]$; otherwise, the robber can move to, and damage a third vertex. Thus, if ${\rm dmg}(G)=2$, conditions (1), (2), and~(3)~of the theorem must hold. The existence of the conditions implies that the robber can damage two vertices. Notably, any vertex $w$, at distance two or three from $z$ must either satisfy those conditions (to ensure the number of vertices damaged cannot exceed two), or $w$ satisfies the condition of Theorem~\ref{thm:dmg1}, which implies the damage number would be $1$ if the robber were to initially occupy such a vertex.\medskip

Considering the reverse implication, let $G$ be a graph with radius $2$ or radius $3$.  Now, suppose the conditions of the theorem are met and the cop initially occupies $z$.  Suppose the robber initially occupies a vertex $y$ satisfying conditions~(1),~(2), and~(3). Since no vertex in $N[z]$ dominates $y$, regardless of where the cop moves, the robber will be able to move to, and damage, a second vertex.  Thus, if the cop initially occupies $z$, at least two vertices will be damaged.  We next show that at most two vertices will be damaged.  The cop initially moves from $z$ to some vertex $s_y$ as described by the conditions of the theorem and the robber's only move is to some vertex $x \not\in N[s_y]$.  Recall such a vertex exists because of condition~(2).  However, by condition~(3), the cop can move to a vertex $s_x \in N[s_y]$ such that $N(x) \backslash \{y\} \subseteq N[s_x]$.  To avoid capture, the robber can either remain at $x$ (provided $s_x \not\sim x$) or move back to $y$ (provided $s_x \not\sim y$).  But, in moving back to $y$, the cop can then move to $v \in N[s_x]$ such that $N(y) \backslash \{x\} \subseteq N[v]$ by condition~(3).  Thus, the robber can only damage two vertices when the cop and robber initially occupy $z$ and $y$, respectively.\end{proof}

Let $G$ be the graph shown in Figure~\ref{fig:Rad2Dmg2}.  Observe that ${\rm rad}(G)=2$; $c$ is the only center vertex (so the only vertex with eccentricity $2$); and $z$ has eccentricity $3$.  Either by inspection or using Theorem~\ref{thm: dmg 2 rad 2}, it is easy to see that if the cop begins at $z$, the robber can damage at most two vertices.  With a little more work, one can observe that if the cop instead begins on vertex $c$, the robber can damage at least three vertices: suppose the cop and robber initially occupy $c$ and $v_5$, respectively.

\begin{itemize}\item If the cop passes or moves to $z$ or $v_3$, the robber can move to $v_8$.  Regardless of the cop's next move, the robber will be able to move to one of $v_6,v_7$ and not be adjacent to the cop.  Thus, the robber will damage at least three vertices.

\item If the cop initially moves to $v_6$, the robber can move to $v_4$.  Regardless of the cop's next move, the robber will be able to move to one of $v_3,v_7$ and not be adjacent to the cop.  Thus, the robber will damage at least three vertices.\end{itemize}

\begin{figure}[ht]
\[ 
\includegraphics[width=0.225\textwidth]{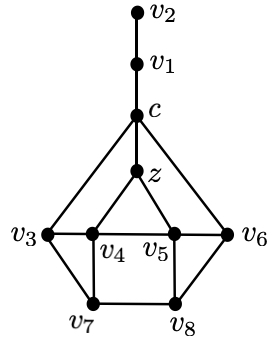}
\] 
\caption{An example of a graph with radius $2$ and damage number $2$ where the optimal starting location for the cop is not a center vertex.} 
\label{fig:Rad2Dmg2}
\end{figure}

Thus, the graph in Figure~\ref{fig:Rad2Dmg2} shows that for a graph of radius $2$, the vertex identified as $z$ in Theorem~\ref{thm: dmg 2 rad 2} is not necessarily a center vertex.  In contrast, if $G$ is a graph with radius $3$ and damage number $2$, the cop must begin on a center vertex in order to restrict the damage to two vertices.  Otherwise, if the cop begins on a vertex of eccentricity at least $4$, the robber can start distance $2$ away from the cop and damage $3$ vertices, as per the argument in the second paragraph of the proof of Theorem~\ref{thm: dmg 2 rad 2}.  More generally, if $G$ is a graph with ${\rm rad}(G)=k$ and ${\rm dmg}(G)=k-1$, the cop needs to start on a center.  But if ${\rm dmg}(G)>{\rm rad}(G)-1$ then that is not necessarily the case. While it seems that Theorem~\ref{thm: dmg 2 rad 2} could be extended to characterize graphs with damage number 3, 4, and so on, to use such characterizations, one would have to consider not just center vertices as potential starting positions for the cop, but non-center vertices too.

To conclude, we briefly comment on the product of graphs with damage number $2$.  Let $G$ and $H$ be graphs, each with damage number $2$, and for which $|V(G)|\geq |V(H)|$.  By Corollary~\ref{corcor} ${\rm dmg}(G \square H) \leq \max \{2|V(G)|,3|V(H)|\}$.  A natural question is whether there exist such graphs $G$ and $H$ (as described above) for which the damage number of the product exceeds $2|V(G)|$. Theorem~\ref{thm:evencyclelower} answers this question affirmatively: ${\rm dmg}(C_6 \square C_6) > 2|V(C_6)|$.  (In fact, by Lemma~\ref{thm: upperbound product of cycles}, ${\rm dmg}(C_6 \square C_6)=2|V(C_6)|+1$.)  We conclude with a natural question.

\begin{question} Can the upper bound of Corollary~\ref{corcor} be improved when the input graphs $G$ and $H$ both have damage number $2$? \end{question}

\section{Acknowledgements} 
M.A. Huggan acknowledges research support from AARMS and Mount Allison University. M.E. Messinger acknowledges research support from NSERC (grant application 2018-04059).

\end{document}